\DeclareMathOperator{\lcm}{lcm}
\theoremstyle{plain}
\newtheorem{thm}{Theorem}
\newtheorem{defin}[thm]{Definition}
\newtheorem{cor}[thm]{Corollary}
\newtheorem{rem}[thm]{Remark}
\newtheorem{conj}[thm]{Conjecture}
\newtheorem{prob}[thm]{Problem}
\newtheorem{lem}[thm]{Lemma}
\title{On the Diophantine Equation Involving Elementary Symmetric Polynomials and the Decomposition of Unity}
\author{}
\date{2026 January}
\author[Kiss]{S\'andor Z. Kiss}
\email{kiss.sandor@ttk.bme.hu}
\address{Department of Algebra and Geometry, Institute of Mathemetics, Budapest University of Technology and Economics, M\H{u}egyetem rkp. 3., H-1111 Budapest, Hungary; \newline \hspace*{4mm} 
HUN-REN-BME Stochastics Research Group, M\H{u}egyetem rkp. 3., H-1111 Budapest, Hungary.
\newline \hspace*{4mm}
HUN-REN Alfr\'ed R\'enyi Institute of Mathematics, Re\'altanoda utca 13--15., H-1053 Budapest,  Hungary; \newline \hspace*{4mm} 
}
\author[S\'andor]{Csaba S\'andor}
\email{sandor.csaba@ttk.bme.hu}
\address{Department of Stochastics, Institute of Mathemetics, Budapest University of Technology and Economics, M\H{u}egyetem rkp. 3., H-1111 Budapest, Hungary; \newline \hspace*{4mm} 
Department of Computer Science and Information Theory, Budapest University of Technology and Economics, M\H{u}egyetem rkp. 3., H-1111 Budapest, Hungary; \newline \hspace*{4mm}
HUN-REN Alfr\'ed R\'enyi Institute of Mathematics, Re\'altanoda utca 13--15., H-1053 Budapest,  Hungary; \newline \hspace*{4mm} 
MTA--HUN-REN RI Lend\"ulet ``Momentum'' Arithmetic Combinatorics Research Group, Re\'altanoda utca 13--15., H-1053 Budapest,  Hungary.}
\author[Zakarczemny]{Maciej Zakarczemny}
\email{maciej.zakarczemny@pk.edu.pl}
\address{Department of Applied Mathematics, Faculty of Computer Science and Mathhematics, Cracow University of Technology, Warszawska 24, 31-155 Krak\'ow, Poland}
\begin{document}

\maketitle

\section{abstract}
\noindent 
We consider the equality of the values of the $n$th and $k$th elementary symmetric polynomials of $n$ not necessarily distinct positive integers. For $k < n$, we prove that this equation always has a solution, but only finitely many solutions. Furthermore, we consider the equality of the values of the $n$th and $(n-2)$th elementary symmetric polynomials of $n$ not necessarily distinct positive integers.
In particular, we show that the number of solutions of this equation tends to infinity if $n$ tends to infinity.  

{\it 2010 Mathematics Subject Classification:} 11D72, 11D45

{\it Keywords and phrases:} diophantine equations; symmetric polynomials

\section{Introduction}
A classical problem concerns perfect powers among binomial coefficients.
For integers $k, l, x \ge 2$ and $n \ge 2k$, consider the equation
$$
\binom{n}{k} = x^{l}.
$$
Obviously, there are infinitely many solutions when $k=l=2$, since substituting $m=2n-1$ reduces the equation to the Pell equation $m^{2}-8x^{2}=1$. For $k = 3$, $l = 2$, the above equation has the only solution $n = 50$, $x = 140$, see \cite{Gy}. It follows from the work of Erd\H{o}s \cite{Er}, K. Gy\H{o}ry \cite{Gy}, H. Darmon and Merel \cite{DM} that apart from the case $k = l = 2$, the above equation has the only solution $n = 50$, $x = 140$, $k = 3$, $l = 2$.
For $j \in \{1,2,\dots,n\}$, the elementary symmetric polynomials in $n$ variables $x_{1}, \dots ,x_{n}$ are denoted by 
$$
\sigma_{j}(x_{1}, \dots ,x_{n}) = \sum_{1\le i_{1} < \dots < i_{j}\le n}x_{i_{1}}\cdots x_{i_{j}}.
$$
Obviously, the special case when $k = l$, the above equation
$
\binom{n}{k} = x^{k}
$
can be written in the form $\sigma_{n-k}(\underbrace{x, \dots ,x}_{n}) = \sigma_{n}(\underbrace{x, \dots ,x}_{n})$. Similarly, if $l\mid k$, then the equation $\binom{n}{l}=x^k$ is equivalent to
$$
\sigma_{n-l}(\underbrace{x^{\frac{k}{l}},
\dots,x^{\frac{k}{l}}}_n)=
\sigma_n(\underbrace{x^{\frac{k}{l}},\dots,x^{\frac{k}{l}}}_n).
$$

This observation led us to consider the Diophantine equations of the form \begin{equation}\label{eqk}\sigma _k(x_1,\dots ,x_n)=\sigma_n(x_1,\dots ,x_n),\end{equation} $1\le x_1\le \dots \le x_n$, $x_i\in \mathbb{Z}$, where $1 \le k < n$, $k,n$ are integers.\\
Let us denote the number of solutions $(x_1,\dots,x_n)\in(\mathbb{Z}^+)^n$ by $f_k(n)$. We have the following estimations for $f_k(n)$.

\begin{thm}\label{generalbound}
For every $1\le k<n$ we have 
$$
1\le f_k(n)<2^{n2^{\binom{n}{k}}}.
$$
\end{thm}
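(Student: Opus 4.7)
The plan is to handle the two bounds separately. It will be convenient throughout to use the reformulation obtained by dividing $\sigma_k(x_1,\dots,x_n)=\sigma_n(x_1,\dots,x_n)$ by $x_1\cdots x_n$, namely
\[
 \sigma_{n-k}\!\left(\tfrac{1}{x_1},\dots,\tfrac{1}{x_n}\right)=1,
\]
which is the ``decomposition of unity'' in the title and reduces the existence question to a generalized Egyptian-fraction problem.

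\textbf{Lower bound.} First I would exhibit an explicit solution for every pair $(n,k)$ with $1\le k<n$. For $k=n-1$ this is classical: the identity $1=\tfrac{1}{2}+\tfrac{1}{4}+\cdots+\tfrac{1}{2^{n-1}}+\tfrac{1}{2^{n-1}}$ gives $(2,4,\dots,2^{n-1},2^{n-1})$. For $k=1$ the ansatz $x_1=\cdots=x_{n-2}=1$ reduces the equation to $(x_{n-1}-1)(x_n-1)=n-1$, and the factorization $1\cdot(n-1)$ supplies $(1,\dots,1,2,n)$. For the intermediate range $1<k<n-1$ I would take $x_1=\cdots=x_{n-k-1}=1$ and $x_{n-k}=2$, turning the original equation into a polynomial relation in the $k$ remaining variables that can be handled greedily by fixing further variables one at a time so as to reduce at each step to a two-variable equation of the shape $(a-C)(b-D)=N$, which always admits the trivial factorization $1\cdot N$.

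\textbf{Upper bound.} The key structural feature is that $\sigma_k-\sigma_n$ is multilinear in each variable. Solving for $x_n$ with the remaining variables fixed yields
\[
 x_n=\frac{\sigma_k(x_1,\dots,x_{n-1})}{x_1\cdots x_{n-1}-\sigma_{k-1}(x_1,\dots,x_{n-1})},
\]
so $f_k(n)$ is bounded by the number of admissible tuples $(x_1,\dots,x_{n-1})$, i.e.\ those for which this quotient is a positive integer no smaller than $x_{n-1}$. The inequality $\sigma_k\le\binom{n}{k}x_n^k$ combined with $x_n\ge x_{n-1}$ forces
\[
 x_1\cdots x_{n-k}\le\binom{n}{k},
\]
confining the first $n-k$ variables to a finite set. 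The remaining $x_{n-k+1},\dots,x_{n-1}$ are then bounded by iterating the same multilinear reduction, each time reading the equation as a linear constraint in the next unknown and expressing it as a rational function of the already bounded ones.

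\textbf{Main obstacle.} Translating this qualitative finiteness into the explicit tower $2^{n\cdot 2^{\binom{n}{k}}}$ is the crux. Each step of the iterated multilinear reduction inflates the bound on the next variable, and the compounding of these inflations with the $\binom{n}{k}$ summands of $\sigma_k$ eventually produces a bound that is exponential in $2^{\binom{n}{k}}$ per variable, yielding the claimed double-exponential bound after multiplication across all $n$ coordinates. Bookkeeping these nested estimates cleanly---and checking that the coarse bound on each new variable really telescopes to the double-exponential form stated---is where the bulk of the technical work lies.
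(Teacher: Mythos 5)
Both halves of your argument stop short of the stated theorem. For the lower bound, your solutions for $k=1$ and $k=n-1$ are correct, but for $1<k<n-1$ the decisive step is missing. Fix $x_1=\dots=x_{n-k-1}=1$, $x_{n-k}=2$ and all but the last two unknowns $a,b$; multilinearity collects the equation into $\alpha ab-\beta(a+b)=\gamma$ with $\alpha=x_1\cdots x_{n-2}-\sigma_{k-2}(x_1,\dots,x_{n-2})$, $\beta=\sigma_{k-1}(x_1,\dots,x_{n-2})$, $\gamma=\sigma_k(x_1,\dots,x_{n-2})$, hence $(\alpha a-\beta)(\alpha b-\beta)=\alpha\gamma+\beta^2$. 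The trivial factorization $1\cdot N$ produces \emph{integers} $a,b$ only when the factors are $\equiv-\beta\pmod{\alpha}$, e.g.\ when $\alpha\mid\beta+1$; nothing in your unspecified ``greedy'' choice of the earlier variables guarantees such a divisibility, and this integrality is exactly where the difficulty lies. The paper's proof is engineered around it: it builds tuples $(1_{n-k-1},2,p_1(\cdot)+1,\dots,p_{k-1}(\cdot)+1,p_k(n))$ recursively, carrying the invariant that increasing the last coordinate by $1$ makes $\sigma_n-\sigma_k=1$; when a new variable is appended, that invariant forces its coefficient to be $1$, so the new variable can simply be taken to be the (integer) value $\sigma_{k+1}$ of the prefix. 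Without an invariant of this kind your intermediate case is unproven.

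For the upper bound you establish at most qualitative finiteness and explicitly defer the derivation of $2^{n2^{\binom{n}{k}}}$ to future ``bookkeeping'', so the inequality in the theorem is not proved. (A smaller slip: $\sigma_k\le\binom{n}{k}x_n^k$ does not give $x_1\cdots x_{n-k}\le\binom{n}{k}$; you need $\sigma_k\le\binom{n}{k}x_{n-k+1}\cdots x_n$, the product of the $k$ largest.) The paper gets the explicit bound in one step from the reformulation you wrote down but then abandoned: $\sum 1/(x_{i_1}\cdots x_{i_{n-k}})=1$ expresses $1$ as a sum of $m=\binom{n}{k}$ unit fractions, so by the classical Erd\H{o}s bound for $\sum_{i=1}^m 1/y_i=1$ the largest denominator satisfies $x_{k+1}\cdots x_n<u_m<2^{2^m}$; since every $x_i\le x_{k+1}\cdots x_n$, each coordinate has fewer than $2^{2^{\binom{n}{k}}}$ admissible values, and $f_k(n)<2^{n2^{\binom{n}{k}}}$ follows with no elimination or iteration at all. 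I recommend replacing your multilinear-elimination sketch by this argument and supplying a genuine construction (or the paper's) for the lower bound when $1<k<n-1$.
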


Moreover, one can obtain upper estimations for $f_{k}(n)$ in terms of the following arithmetic function.
Let $f(n)$ denote the number of factorizations of the natural number $n$ into factors larger than $1$ where the order of the factors does not count. We put $f(1)=1.$
Let
$$M_{k,n}:=\max\{x_n\colon (x_1,\dots,x_n)\in(\mathbb Z^+)^n,\ 1\le x_1\le\cdots\le x_n,\ \sigma_k(x_1,\dots,x_n)=\sigma_n(x_1,\dots,x_n)\},$$
$$N_{k,n}^{(l)}:=\max\{x_1x_2\dots x_{n-l}\colon (x_1,\dots,x_n)\in(\mathbb Z^+)^n,\ 1\le x_1\le\cdots\le x_n,\ \sigma_k(x_1,\dots,x_n)=\sigma_n(x_1,\dots,x_n)\},$$ where $1\le k< n.$
Note that, by Theorem~\ref{generalbound}, the positive integers $N_{k,n}^{(l)}$ and $M_{k,n}$ exist.\\
We prove the following lemma. The known growth of the function $f$ (see \cite{CEP}) allows one to estimate the right–hand side in (2).

\begin{lem}\label{upperbound} 
\begin{enumerate} Let $1 \le k < n$, $k,n\in \mathbb{Z}$. The number of solutions satisfies the following estimates
\item $f_k(n)\le (M_{k,n})^n$, 
\item $\displaystyle \sum_{k< n\le x}f_k(n)\le \sum_{l\le \max_{k< n\le x}N_{k,n}^{(0)}}f(l)$,
\end{enumerate} 
\end{lem}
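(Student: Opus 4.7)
I treat the two parts separately; (1) is elementary counting, while (2) rests on a monotonicity argument that distinguishes solutions sharing the same set of non-one entries.

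For (1), any solution $(x_1,\ldots,x_n)$ in the defining set of $f_k(n)$ satisfies $x_i \le x_n \le M_{k,n}$ for every $i$, so the $n$-tuple lies in $\{1,\ldots,M_{k,n}\}^n$. There are exactly $(M_{k,n})^n$ such tuples, so $f_k(n) \le (M_{k,n})^n$ at once.

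For (2), the plan is to inject the set of all solutions (taken over the range $k < n \le x$) into the unordered factorizations counted by $f$. To each solution $(x_1,\ldots,x_n)$ I associate the pair consisting of $L := x_1 x_2 \cdots x_n = \sigma_n(x_1,\ldots,x_n)$ and the multiset $Y := \{x_i : x_i > 1\}$, which is a factorization of $L$ into factors $>1$. By the definition of $N_{k,n}^{(0)}$ we have $L \le N_{k,n}^{(0)} \le \max_{k<n\le x} N_{k,n}^{(0)}$. The crux is showing injectivity: if two solutions $(x_1,\ldots,x_n)$ and $(x_1',\ldots,x_{n'}')$ (with possibly $n \ne n'$) give the same multiset $Y = \{y_1,\ldots,y_m\}$, then they can differ only in the number of ones, say $s = n - m$ versus $s' = n' - m$. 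Applying the identity
$$\sigma_k(1,z_1,\ldots,z_r) \;=\; \sigma_k(z_1,\ldots,z_r) + \sigma_{k-1}(z_1,\ldots,z_r)$$
inductively, and using that $\sigma_{k-1}(z_1,\ldots,z_r) > 0$ as soon as $r \ge k-1$ (which is secured by the hypothesis $n > k$), I conclude that the map $s \mapsto \sigma_k(\underbrace{1,\ldots,1}_{s},y_1,\ldots,y_m)$ is strictly increasing on the range where it takes positive values. Since both values equal the positive number $L$, this forces $s = s'$, hence $n = n'$, and the two solutions coincide.

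The injection then bounds $\sum_{k<n\le x} f_k(n)$ by the number of unordered factorizations of integers $L \le \max_{k<n\le x} N_{k,n}^{(0)}$ into factors $>1$, which is exactly $\sum_{l \le \max_{k<n\le x}N_{k,n}^{(0)}} f(l)$. I expect the only mildly delicate point to be the strict monotonicity step (ensuring $\sigma_{k-1}$ is strictly positive for every relevant tuple length, which is why the constraint $n>k$ is used); once that is in place, the counting conclusion in (2) follows immediately, and combining with the growth estimate for $f$ from \cite{CEP} yields effective quantitative versions.
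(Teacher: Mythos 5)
Your proposal is correct and follows essentially the same route as the paper: part (1) is the same trivial counting in $\{1,\dots,M_{k,n}\}^n$, and part (2) is the same injection of solutions into factorizations of $L=\sigma_n\le N_{k,n}^{(0)}$ into factors greater than $1$, with injectivity coming from the fact that appending ones keeps the product fixed while strictly increasing $\sigma_k$. The paper phrases the monotonicity via the chain $\sigma_k(1_u,X_1,\dots,X_s)$ strictly increasing in $u$, which is exactly your $\sigma_k(1_{s+1},Y)=\sigma_k(1_s,Y)+\sigma_{k-1}(1_s,Y)$ argument.
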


\begin{thm}\label{generalbound2}
If $\sigma_k(x_1,\dots ,x_n)=\sigma_n(x_1,\dots ,x_n)$, then $$\sigma_n(x_1,\dots ,x_n)\ge \binom{n}{k}^{\frac{n}{n-k}}.$$ The equality holds if and only if $k=n-1$, $x_1=\dots =x_n=n$ and $k=n-2$, $\binom{n}{2}$ is a square, $x_1=\dots =x_n=\sqrt{\binom{n}{2}}$.
\end{thm}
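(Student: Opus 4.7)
The plan is to apply the AM-GM inequality to the $\binom{n}{k}$ monomials making up $\sigma_k(x_1,\dots,x_n)$. Since each variable $x_i$ appears in exactly $\binom{n-1}{k-1}$ of these monomials, the product of all of them is $\bigl(\prod_{i=1}^{n}x_i\bigr)^{\binom{n-1}{k-1}}$, and the identity $\binom{n-1}{k-1}/\binom{n}{k}=k/n$ turns AM-GM into
\[
\sigma_k(x_1,\dots,x_n)\ge \binom{n}{k}\,\sigma_n(x_1,\dots,x_n)^{k/n}.
\]
Using the hypothesis $\sigma_k=\sigma_n$ and writing $S=\sigma_n$, one rearranges this to $S^{(n-k)/n}\ge\binom{n}{k}$, i.e.\ the claimed $S\ge\binom{n}{k}^{n/(n-k)}$.

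For the equality characterization I would observe that equality in AM-GM forces all the $\binom{n}{k}$ products $x_{i_1}\cdots x_{i_k}$ to agree; comparing two $k$-subsets that differ in a single index gives $x_i=x_j$ for all $i,j$, so $x_1=\cdots=x_n=c$ for some positive integer $c$. Substituting back, the condition $\sigma_k=\sigma_n$ collapses to the Diophantine equation $c^{n-k}=\binom{n}{k}$. The case $k=n-1$ yields $c=n$, and the case $k=n-2$ yields $c^2=\binom{n}{2}$, solvable in positive integers precisely when $\binom{n}{2}$ is a perfect square; these are the two cases listed.

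The main obstacle is to rule out $k\le n-3$. For $k=1$ (hence $n\ge 4$) this is elementary: the inequality $n<2^{n-1}$ forces $c<2$ in $c^{n-1}=n$, so no positive integer solution exists. For $k\ge 2$ with $n-k\ge 3$ I would invoke the deep theorem of Gy\H{o}ry (with Darmon--Merel covering the subcase $k=3$) quoted in the introduction: if $k,l\ge 2$ and $n\ge 2k$, then $\binom{n}{k}=x^l$ has no solution beyond the Pell family $k=l=2$ and the lone exception $(n,k,l,x)=(50,3,2,140)$. By the symmetry $\binom{n}{k}=\binom{n}{n-k}$ one can pass to the regime $n\ge 2k$ if needed; in either orientation the exponent to be matched is at least $3$, so neither exceptional solution applies and no further equality cases arise. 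Excluding this last subcase without such deep input appears to be the essential difficulty.
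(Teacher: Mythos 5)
Your proof is correct and follows essentially the same route as the paper: your AM--GM bound $\sigma_k\ge\binom{n}{k}\sigma_n^{k/n}$ is exactly the inequality the paper derives via the harmonic--geometric mean after dividing by $x_1\cdots x_n$, and the equality analysis (all $x_i$ equal, hence $x^{n-k}=\binom{n}{k}$, with $n-k\ge 3$ excluded by the Erd\H{o}s--Gy\H{o}ry results) is the same. Your separate elementary treatment of $k=1$ is a sensible refinement, since the cited perfect-power theorems for $\binom{n}{k}=x^l$ require the lower index (after symmetry) to be at least $2$, a point the paper glosses over.
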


\subsection{The case \texorpdfstring{$k=1$}{k=1}}
The equation (1) can be written in the form 
$$x_1+\dots +x_n=x_1\cdots x_n,\,n\ge 2$$ which is called {\it Equal sum and product problem}, which has been extensively investigated by Canfield, Erd\H{o}s and Pomerance, Weingartner, Nyblom, Ecker, Kurlandchik and Nowicki, Sándor and Zakarczemny, among others \cite{CEP,RW,ZAMUC,ZCMB,SZ,Ny,Eck,KN}. 
Other examples are recent results connected to factorizations \cite{BFT,BMS,Tij,Matv,ACM,Sci,Sc}.
Clearly, $(x_1,x_2,\dots ,x_n)=(1,1,\dots ,1,2,n)$ is a solution, because $1+1+\dots +1+2+n=2n$.
To give a complete description about the solution set of this diophantine equation is a still unsolved problem \cite{Guy2004}, \cite{Sc}. Even for the number of solutions, the following conjecture of Schinzel \cite{Sci} still remains open.
\begin{conj}
    One has $\displaystyle \lim_{n\to \infty}f_1(n)=\infty$ .
\end{conj}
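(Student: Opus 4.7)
The starting point is to parameterize the solutions of $x_1+\cdots+x_n=x_1\cdots x_n$, $1\le x_1\le\cdots\le x_n$, by the number of coordinates that are $\ge 2$. If $j$ coordinates $y_1\le\cdots\le y_j$ lie in $\{2,3,\dots\}$ and the remaining $n-j$ equal $1$, the equation reduces to
\[
y_1\cdots y_j\;-\;(y_1+\cdots+y_j)\;=\;n-j,\qquad y_i\ge 2.
\]
Writing $g_j(n)$ for the number of such multisets, we have $f_1(n)=\sum_{j\ge 2}g_j(n)$, and Schinzel's conjecture asserts that this sum diverges with $n$.

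My first step would be to extract explicit lower bounds from short parameterizations. For $j=2$ the equation factors as $(y_1-1)(y_2-1)=n-1$, so $g_2(n)=\lceil d(n-1)/2\rceil$ where $d$ is the divisor function. For $j=3$ with smallest coordinate $y_1=k\ge 2$, multiplying through by $k$ and completing gives
\[
(ky_2-1)(ky_3-1)\;=\;kn+(k^2-3k+1),
\]
so this sub-family alone contributes roughly $\tfrac{1}{2}d\bigl(kn+k^2-3k+1\bigr)$ solutions, after discarding divisor pairs that violate $y_2\ge k$. Iterating on $k$ yields a divergent family of linear forms $L_k(n)=kn+k^2-3k+1$ and a lower bound
\[
f_1(n)\;\ge\;\tfrac{1}{2}\,d(n-1)\;+\;\tfrac{1}{2}\sum_{k=2}^{K}d\bigl(L_k(n)\bigr)\;-\;O(K).
\]

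Since $d$ has average order $\log n$, the right-hand side is on average of order $K\log n$, and hence for almost all $n$ the lower bound already diverges. One could try to upgrade this to an unconditional statement by averaging: prove $\sum_{n\le x}f_1(n)\gg x\log x$, or else exhibit, for each large $n$, a single $k\le K(n)$ for which $L_k(n)$ has at least three divisors—either of these would already imply $f_1(n)\to\infty$ along a positive-density subsequence, and with a compactness argument possibly everywhere.

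The \emph{hard part}, and precisely the reason the conjecture has resisted, is pointwise control of the divisor values $d(L_k(n))$. At present one cannot exclude a conspiratorial $n$ for which $n-1,\ 2n-1,\ 3n+1,\ 4n+5,\dots$ are simultaneously prime, or products of two primes; such a statement is a close cousin of the prime $k$-tuples conjecture and lies beyond current sieve technology. An unconditional attack will therefore probably have to leave the divisors-of-a-linear-form framework entirely, for example by counting lattice points on the two-parameter surface $y_1y_2y_3-y_1-y_2-y_3=n-3$ (or its $j\ge 4$ analogues) and exploiting the extra degree of freedom—via character sums, a circle-method estimate, or an Erd\H{o}s--Pomerance-style averaging argument—to force a growing number of solutions irrespective of the multiplicative structure of any single shifted linear form.
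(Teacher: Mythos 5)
The statement you are trying to prove is not a theorem of the paper: it is Schinzel's conjecture, which the paper explicitly records as \emph{open} (``Even for the number of solutions, the following conjecture of Schinzel still remains open''), so there is no proof in the paper to compare against, and your text does not supply one either. What you have written is a plan, and you yourself concede the decisive step is missing: after reducing to the equation $y_1\cdots y_j-(y_1+\cdots+y_j)=n-j$ and to divisor counts of shifted linear forms such as $n-1$ and $L_k(n)=kn+k^2-3k+1$, everything hinges on showing that for \emph{every} large $n$ at least one of these values has many divisors. Average-order statements ($d$ has mean $\log n$, or $\sum_{n\le x}f_1(n)\gg x\log x$ -- which is in fact known, since Weingartner's asymptotic gives $\sum_{n\le x}f_1(n)=x^{1+o(1)}$) only control $f_1(n)$ for almost all $n$, and a density-one or positive-density statement does not yield $\lim_{n\to\infty}f_1(n)=\infty$; the ``compactness argument'' you invoke to pass from a subsequence to all $n$ has no content here, since the obstruction is exactly the possible existence of sparse exceptional $n$ for which all the relevant shifted forms are (near-)prime, and ruling that out is, as you note, of prime-$k$-tuples difficulty. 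So the gap is not a technical detail: the core assertion is unproved and currently unprovable by the route you describe.

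Two smaller points in the part you did write out. First, in the $j=3$ family the identity $(ky_2-1)(ky_3-1)=kn+k^2-3k+1$ is correct, but a factorization $uv=L_k(n)$ produces integer $y_2,y_3$ only when $u\equiv v\equiv -1\pmod k$, so the contribution is not ``roughly $\tfrac12 d(L_k(n))$'' but only the count of divisors in that residue class, which can be much smaller (even zero); your displayed lower bound for $f_1(n)$ is therefore not justified as stated. Second, distinct $(j,k)$ families can produce the same solution only in degenerate cases, so disjointness is fine, but the error term $O(K)$ hides the $y_2\ge y_1=k$ and congruence discards and should be made explicit before any averaging argument. For context, the best known results in the direction of the conjecture are of the type quoted in the paper: $f_1(n)<n^{1+\varepsilon}$ for large $n$, and $f_1(n_i)>n_i^{1-\varepsilon}$ along some infinite sequence $n_i$ (S\'andor--Zakarczemny), which is consistent with, but far from, pointwise divergence.
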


Kurlandchik and Nowicki \cite{KN} proved that
if $x_1+\dots +x_n = x_1\cdots x_n$, then
\begin{enumerate}
\item $x_n \le n$,
\item $x_1\cdots x_n \le 2n$.
\end{enumerate}

\begin{rem}
If $x_1+x_2+\dots +x_n=x_1x_2\dots x_n$, then $x_n\ge 2$. By the above result of Kurlandchik and Nowicki, one can get that $x_1x_2\dots x_{n}\le 2n$ (see also \cite[Theorem 4]{KN}), that is $N_{1,n}^{(0)}=2n$. 
\end{rem}
Furthermore, Oppenheim proved \cite{Op} that
$$
\sum_{n\le x}f(n)=(1+o(1))\frac{xe^{2\sqrt{\log x}}}{2\sqrt{\pi }(\log x)^{3/4}}.
$$
It follows that  
$$
\sum_{2\le n\le x}f_1(n)\le \sum_{l\le 2x}f(l)=(2+o(1))\frac{xe^{2\sqrt{\log x}}}{2\sqrt{\pi }(\log x)^{3/4}}
$$

Moreover, Weingartner \cite{RW} proved that
$$\sum_{2\le n\le x}f_1(n)=(1+o(1))\frac{xe^{2\sqrt{ \log x}}}{2\sqrt{\pi }(\log x)^{3/4}}.
$$
Let $\varepsilon > 0$. As a corollary, one can get $f_1(n)<n^{1+\varepsilon }$ if $n$ is large enough.
In the other direction S\' andor and Zakarczemny \cite{SZ} proved the existence of an infinite increasing sequence $n_i$ of positive integers with $f_1(n_i)>n_i^{1-\varepsilon}$.





\subsection{The case \texorpdfstring{$k=2$}{k=2}} In this case the equation \ref{eqk} can be written in the form
 $$\sum_{1\le i<j\le n}x_ix_j=x_1\cdots x_n,$$ 
 $n \ge 3$.
 It is easy to check that a solution is $$(x_1,x_2,\dots ,x_n)=(1,1,\dots ,1,2,n,\frac{1}{2}n(3n-5)).$$

P. Miska and M. Ulas \cite{MU} proved that 
if $\sum_{1\le i<j\le n}x_ix_j=x_1\cdots x_n$, then
\begin{enumerate}
\item $x_n\le \frac{1}{2}n(3n-5)$
\item $x_1\cdots x_n\le n^2(3n-5)$,
\end{enumerate}
which implies that $N_{2,n}^{(0)}=n^{2}(3n-5)$. Then by Lemma \ref{upperbound} (2) and Oppenheim's bound we have $$\sum_{n\le x}f_2(n)\le \sum_{n\le x^2(3x-5)}f(n)=x^{3+o(1)},$$ which gives the upper estimation $$f_2(n)<n^{3+o(1)}.$$
\subsection{The case \texorpdfstring{$k=n-1$}{k=n-1}}
The equation (\ref{eqk}) is equivalent to the classical Egyptian fraction equation $$\sum_{i=1}^n\frac{1}{x_i}=1,$$
deeply connected with Sylvester's sequence \cite{ACMJ,OEISA000058}.
Let us define the sequence $u_n$ by $u_1=2$ and $u_{n+1}=u_n^2-u_n+1$ for $n\ge 1$, $(u_n)_{n=1}^{\infty} =(2,3,7,43,1807,\dots )$. Then $$u_{n+1}=u_1u_2\dots u_n+1.$$ By induction on $n$ we get $$\sum_{i=1}^{n-1}\frac{1}{u_i}+\frac{1}{u_n-1}=1,$$ that is
$$
\sigma_{n-1}(u_1,u_2,\dots ,u_{n-1},u_n-1)=\sigma_n(u_1,u_2,\dots ,u_{n-1},u_n-1).
$$
In 1950, Erd\H{o}s \cite{E} proved that
if $x_i\in\mathbb{Z^+}$ and $\sum_{i=1}^n\frac{1}{x_i}=1$, then $\max x_n \le u_n-1$. 
In 2005, Soundararajan \cite{S} generalized this result by proving that
if $x_i\in\mathbb{Z^+}$ and $\sum_{i=1}^n\frac{1}{x_i}<1$, then $$\sum_{i=1}^n\frac{1}{x_i}\le \sum_{i=1}^n\frac{1}{u_n}= 1-\frac{1}{u_{n+1}-1} = 1-\frac{1}{u_1u_2...u_n}.$$
Since $(u_{n-1}-1)^2<u_n<u_{n-1}^2$, the sequence $u_n^{1/2^n}$ is monotone decreasing, so the following limit exists $\lim_{n\to \infty }u_n^{1/2^n}=c_1=1.2640847353\ldots$. By Soundararajan's bound, it is easy to see that $$\sum_{i=1}^{n}\frac{1}{x_{i}} = 1,$$
$1 \le x_{1} \le x_{2} \le \dots \le x_{n}$, $x_{i}\in \mathbb{Z}$ implies that $x_{n-k} \le (k+1)u_{n-k}$ for $0 \le k \le n-1$. Hence $x_{1}x_{2}\dots x_{n} \le n!u_{1}u_{2}\dots u_{n}$.
It follows that $N_{n-1,n}^{(0)}=u_n^{2+o(1)}$. Hence (2) in Lemma \ref{upperbound} with $k=n-1$, $x=n$ and Oppenheim's bound give the estimation $f_{n-1}(n)\le u_n^{2+o(1)}$.

In 2011, Browning and Elsholtz \cite{BE} improved on this result by proving  

$f_{n-1}(n)\le u_n^{0.4+o(1)}=c_1^{(0.4+o(1))2^n}$
as $n\rightarrow \infty$.
On the other hand, in 2014 Konyagin \cite{K} gave the following double exponential lower bound.
There exists a $c>0$ such that
$f_{n-1}(n)>2^{2^{c\frac{n}{\log n}}}$.

\subsection{The case \texorpdfstring{$k=n-2$}{k=n-2}}
The equation (\ref{eqk}) is equivalent to \begin{equation}\label{sum1/xy=1}\sum_{1\le i<j\le n}\frac{1}{x_ix_j}=1.\end{equation}
\begin{rem}
For small $n$ we explicitly list all solutions of \eqref{sum1/xy=1}. For $n=2$ the unique solution is $(1,1)$. For $n=3$ the unique solution is $(1,2,3)$. For $n=4$ there are exactly two solutions: $(1,2,4,14)$ and $(2,2,2,6)$. For $n=5$ there are exactly $27$ solutions, namely 
$$
(1,2,4,15,{218}), (1,2,4,16,116), (1,2,4,17,82), (1,2,4,18,65), (1,2,4,20,48), (1,2,4,{26},31),
(1,2,5,9,163),$$
$$ (1,2,5,10,60), (1,2,6,7,152), (1,2,6,8,43), (1,2,7,7,35), (1,2,7,8,22), (1,2,7,9,17),
(1,2,{8},8,16),$$
$$ (1,3,3,8,129), (1,3,3,9,48), (1,3,3,12,21), (1,3,4,5,107), (1,{4},4,4,28),
(2,2,2,7,46), (2,2,2,8,26),$$ 
$$ (2,2,2,10,16), (2,2,2,11,14), (2,2,3,4,19), (2,2,4,4,8),
(2,3,3,3,9), ({3},3,3,3,4).
$$

\end{rem}
\begin{defin}\label{df1}
Let the sequence \( v_n \) be defined recursively by
$$
v_1 = 1, \quad v_2 = 2, \quad \text{and for } n \ge 2,\quad
v_{n+1} = 1 + \frac{\sum\limits_{1\le i\le n} \frac{1}{v_i} } { 1 - \sum\limits_{1 \le i < j \le n} \frac{1}{v_i v_j} }.
$$
\end{defin}
We have $(v_n)_{1}^\infty=(1,2,4,15,219,47863,2290845187,\ldots).$\\

Then
$$
\sum_{1\le i<j\le n-1}\frac{1}{v_iv_j}+\frac{1}{v_n-1}\sum_{i=1}^{n-1}\frac{1}{v_i}=1,
$$
that is
$$\sigma_{n-2}(v_1,v_2,\dots ,v_{n-1},v_n-1)=\sigma_n(v_1,v_2,\dots ,v_{n-1},v_n-1).$$
We will prove the following recursions.

\begin{thm}\label{18}
\begin{enumerate} For every $n\ge 2$,
\item $v_{n+1} = v_n^2+v_nv_{n-1}+v_{n-1}^2-v_{n-1}^3-v_n-v_{n-1}+1.
$
\item $v_{n+1}=v_n^2-v_n+1+\prod\limits_{1\le i\le n-1}v_i$
\item $v_{n+1}=1+\left(\prod\limits_{1\le i\le n}v_i\right)\left(\sum\limits_{1\le i\le n}\frac{1}{v_i}\right).$
\end{enumerate}
\end{thm}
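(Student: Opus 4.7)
Write $S_n = \sum_{i=1}^n 1/v_i$, $T_n = \sum_{1\le i<j\le n} 1/(v_iv_j)$, and $P_n = \prod_{i=1}^n v_i$. Then Definition~\ref{df1} reads $v_{n+1} - 1 = S_n/(1 - T_n)$. The plan is to prove (3) first, deduce (2) from (3), and finally deduce (1) from (2).

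The key observation for (3) is that, via the reformulation of the recursion just stated, the equality $v_{n+1} = 1 + P_nS_n$ is equivalent to the single identity
\[
P_n(1 - T_n) = 1, \qquad \text{i.e.}\qquad P_nT_n = P_n - 1.
\]
I would prove this by induction on $n \ge 2$. The base case $n = 2$ is immediate from $P_2 = 2$ and $T_2 = 1/2$. For the inductive step, the elementary recursions $T_n = T_{n-1} + S_{n-1}/v_n$ and $P_n = v_nP_{n-1}$ give
\[
P_nT_n = v_nP_{n-1}T_{n-1} + P_{n-1}S_{n-1}.
\]
The inductive hypothesis $P_{n-1}T_{n-1} = P_{n-1} - 1$, together with its equivalent form $P_{n-1}S_{n-1} = v_n - 1$ (which is (3) at level $n-1$), turns the right-hand side into $v_n(P_{n-1}-1) + (v_n - 1) = P_n - 1$, closing the induction.

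For (2), split off the last term of $S_n$ in (3):
\[
v_{n+1} = 1 + P_nS_n = 1 + P_nS_{n-1} + P_{n-1} = 1 + v_n\bigl(P_{n-1}S_{n-1}\bigr) + P_{n-1},
\]
and substitute $P_{n-1}S_{n-1} = v_n - 1$ to obtain $v_{n+1} = v_n^2 - v_n + 1 + P_{n-1}$. For (1), apply (2) at level $n-1$ to get $P_{n-2} = v_n - v_{n-1}^2 + v_{n-1} - 1$ (for $n = 2$ one verifies (1) directly), hence $P_{n-1} = v_{n-1}P_{n-2} = v_{n-1}v_n - v_{n-1}^3 + v_{n-1}^2 - v_{n-1}$, and plug this into (2) to recover (1). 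The only substantive step is the identity $P_nT_n = P_n - 1$; the rest is algebraic rewriting, and I do not anticipate a genuine obstacle.
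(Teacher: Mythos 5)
Your proposal is correct, and it runs the paper's chain of implications in the opposite direction with a different key identity. The paper proves (1) first, working directly with the defining recursion: it derives the relation $S_{n-1}\bigl(v_{n+1}-v_n(v_n-1)-1\bigr)=v_n-1$, writes down the same relation with the index shifted (using $S_{n-2}=S_{n-1}-1/v_{n-1}$), and eliminates $S_{n-1}$ between the two to get the three-term recursion (1); it then obtains (2) from (1) by induction on the identity $v_{n+1}-(v_n^2-v_n+1)=v_{n-1}\bigl(v_n-(v_{n-1}^2-v_{n-1}+1)\bigr)$, and finally (3) from (2) together with the displayed relation above. You instead prove (3) first by a short induction on the single identity $P_n(1-T_n)=1$, i.e.\ $\sum_{1\le i<j\le n}\frac{1}{v_iv_j}=1-\frac{1}{v_1\cdots v_n}$ --- a fact the paper only records later (in the proof of Theorem~\ref{upp}) as a consequence of (3) --- and then descend to (2) and (1) by elementary substitutions, checking the small cases $n=2$ directly where the level-$(n-1)$ statements fall outside the stated range. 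Your route is somewhat cleaner: it avoids the elimination/index-shifting step with $S_{n-2}$ and delivers the product formula $1-Q_n=1/P_n$ (which also shows $1-Q_n>0$, so the recursion stays well defined) as a byproduct; the paper's route has the feature that (1) is obtained directly from the definition without first knowing the closed product identity. Both arguments are complete and elementary, and your algebra checks out at every step.
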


Note that the sequence $v_n$ is an increasing sequence of positive integers.

It follows from (3) of Theorem \ref{18} that $v_{n+1}-1>v_1v_2\dots v_n$ and for $n>2$ that $$(v_n-1)^2<v_{n+1}\le v_n^2.$$ 
Consequently, the sequence $v_n^{1/2^n}$ is monotone decreasing, and hence the limit
$$\lim_{n\to \infty }v_n^{1/2^n}=c_2
= 1.183382020799312\ldots$$ exists.  
We prove the following estimations.
\begin{thm}\label{upp}
If $b_i\in\mathbb{Z^+}$ and $\sum_{1\le i<j\le n}\frac{1}{b_ib_j}<1$, then $$\sum_{1\le i<j\le n}\frac{1}{b_ib_j}\le \sum_{1\le i<j\le n}\frac{1}{v_iv_j}= 1-\frac{1}{v_1v_2\cdots v_n}$$ and $$\sum_{i=1}^n\frac{1}{b_i}\le \sum_{i=1}^n\frac{1}{v_i}.$$
\end{thm}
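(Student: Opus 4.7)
The plan is to prove both inequalities simultaneously by induction on $n$. After ordering $b_1 \le b_2 \le \cdots \le b_n$, I set $S_k = \sum_{i \le k} 1/b_i$ and $F_k = \sum_{1 \le i<j\le k} 1/(b_ib_j)$, with $S_k^{\star}$ and $F_k^{\star}$ denoting the analogous quantities for $(v_1,\ldots,v_k)$. The base case $n = 2$ is immediate: the hypothesis $1/(b_1b_2) < 1$ forces $b_1b_2 \ge 2 = v_1v_2$, giving the pair-sum inequality, and a case split on whether $b_1 = 1$ or $b_1 \ge 2$ yields $S_2 \le 3/2 = S_2^{\star}$.

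For the inductive step, since $F_{n-1} < F_n < 1$, the inductive hypothesis applied to $(b_1,\ldots,b_{n-1})$ yields $F_{n-1} \le F_{n-1}^{\star} = 1 - 1/(v_1\cdots v_{n-1})$ and $S_{n-1} \le S_{n-1}^{\star}$. The constraint $F_n < 1$ rearranges as $b_n > S_{n-1}/(1-F_{n-1})$, and since $b_n$ is a positive integer, $b_n \ge \lfloor S_{n-1}/(1-F_{n-1})\rfloor + 1$. By Definition~\ref{df1}, $v_n = 1 + S_{n-1}^{\star}/(1-F_{n-1}^{\star})$, and this expression is a positive integer, as noted in the paper.

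If $b_n \ge v_n$, both inequalities follow by termwise combination with the inductive hypothesis: $S_n = S_{n-1} + 1/b_n \le S_{n-1}^{\star} + 1/v_n = S_n^{\star}$, and analogously $F_n = F_{n-1} + S_{n-1}/b_n \le F_{n-1}^{\star} + S_{n-1}^{\star}/v_n = F_n^{\star}$. The subtle case is $b_n \le v_n - 1$, in which $1/b_n > 1/v_n$, so I must show that the positive slacks $S_{n-1}^{\star} - S_{n-1}$ and $F_{n-1}^{\star} - F_{n-1}$ from the IH suffice to compensate for the deficit caused by $b_n$ being below $v_n$.

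I expect the main obstacle to be this case $b_n < v_n$. My strategy is to exploit integrality: setting $\gamma = S_{n-1}/(1-F_{n-1})$, the integer bound $b_n \ge \lfloor\gamma\rfloor + 1$ gives the residual estimate $b_n(1-F_{n-1}) - S_{n-1} \ge (1-F_{n-1})(\lfloor\gamma\rfloor + 1 - \gamma)$, i.e., a lower bound on $b_n(1 - F_n)$. Combined with the exact identity $v_n(1-F_{n-1}^{\star}) - S_{n-1}^{\star} = 1 - F_{n-1}^{\star}$ (a rewriting of the defining recursion for $v_n$), and after substituting the IH differences via cross-multiplication and careful algebra, I expect both $1 - F_n \ge 1/(v_1\cdots v_n)$ and $S_n \le S_n^{\star}$ to emerge in parallel. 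The essential point is that the integrality of $v_n - 1 = S_{n-1}^{\star}/(1-F_{n-1}^{\star})$ exactly matches the integer rounding in the lower bound on $b_n$, which is what makes the compensation tight in this case.
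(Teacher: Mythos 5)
Your base case and the case $b_n\ge v_n$ are fine, but the whole difficulty of the theorem sits in the case $b_n<v_n$, and there you offer only an expectation ("after careful algebra I expect both inequalities to emerge"), not an argument --- and the route you sketch cannot be completed from the induction hypothesis you carry. The two scalar inequalities $F_{n-1}\le F_{n-1}^{\star}$ and $S_{n-1}\le S_{n-1}^{\star}$, together with the integrality of $b_n$ and the constraint $F_n<1$, simply do not imply $F_n\le F_n^{\star}$. Concretely, for $n=4$ one has $F_3^{\star}=7/8$, $S_3^{\star}=7/4$, $F_4^{\star}=119/120$; the data $F_3=7/8$, $S_3=87/50$, $b_4=14=\lfloor S_3/(1-F_3)\rfloor+1$ satisfy every hypothesis your step uses ($F_3\le F_3^{\star}$, $S_3\le S_3^{\star}$, $F_3+S_3/b_4\approx 0.99929<1$, $b_4$ the least admissible integer), yet $F_3+S_3/b_4>119/120$. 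No integer triple $(b_1,b_2,b_3)$ realizes these values, but ruling that out is precisely the content of the theorem; your induction hypothesis retains too little information about $(b_1,\dots,b_{n-1})$ to exclude it. Moreover, your rounding estimate, pushed as far as integrality allows (one gets $\lfloor\gamma\rfloor+1-\gamma\ge 1/\bigl(b_1\cdots b_{n-1}(1-F_{n-1})\bigr)$), yields only $F_n\le 1-1/(b_1\cdots b_n)$, which is weaker than the target $1-1/(v_1\cdots v_n)$ whenever $b_1\cdots b_n>v_1\cdots v_n$ --- and nothing in your hypotheses prevents that.

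This is exactly why the paper does not induct on the last variable alone: it splits according to whether $b_1\cdots b_n\ge v_1\cdots v_n$ or not. In the first case it takes the largest index $l$ with $b_l\cdots b_n\ge v_l\cdots v_n$, so that \emph{all} tail partial products dominate, converts these product inequalities into the inequalities for $\sum 1/b_i$ and $\sum_{i<j}1/(b_ib_j)$ on the tail via a Muirhead-type majorization lemma (Soundararajan's method), and only then applies induction to the head $b_1,\dots,b_{l-1}$. In the second case the pair-sum bound follows from the denominator argument $F_n<1\Rightarrow F_n\le 1-1/(b_1\cdots b_n)\le 1-1/(v_1\cdots v_n)$ (your observation is the first half of this), while the bound on $\sum 1/b_i$ requires a further majorization argument by contradiction. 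To repair your proof you would have to strengthen the induction hypothesis so that it carries product (majorization) information along, at which point you essentially reconstruct the paper's argument.
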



\begin{thm}\label{xnvn-1}
If $\sigma_{n-2}(x_1,x_2,\dots ,x_n)=\sigma_n(x_1,x_2,\dots ,x_n)$, where $x_i\in\mathbb{Z^+}$ and $0<x_1\le x_2\le \dots \le x_n$, then $x_n\le v_n-1$.
\end{thm}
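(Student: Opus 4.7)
The plan is to reduce the claim to a direct corollary of Theorem~\ref{upp}. First, dividing both sides of $\sigma_{n-2}(x_1,\dots,x_n)=\sigma_n(x_1,\dots,x_n)$ by $\sigma_n=x_1\cdots x_n$ converts the equation into the equivalent identity
$$\sum_{1\le i<j\le n}\frac{1}{x_ix_j}=1.$$
Splitting off the pairs that involve the index $n$ yields
$$\sum_{1\le i<j\le n-1}\frac{1}{x_ix_j}+\frac{1}{x_n}\sum_{i=1}^{n-1}\frac{1}{x_i}=1,$$
and solving for $x_n$ gives the closed form
$$x_n=\frac{\displaystyle\sum_{i=1}^{n-1}\frac{1}{x_i}}{\displaystyle 1-\sum_{1\le i<j\le n-1}\frac{1}{x_ix_j}}.$$
Because the numerator is strictly positive and $x_n<\infty$, the same equation shows that $\sum_{1\le i<j\le n-1}\frac{1}{x_ix_j}<1$ strictly.

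Next, I would apply Theorem~\ref{upp} to the $(n-1)$-tuple $(x_1,\dots,x_{n-1})$, whose hypothesis is exactly the strict inequality just established. This yields the two bounds
$$\sum_{i=1}^{n-1}\frac{1}{x_i}\le\sum_{i=1}^{n-1}\frac{1}{v_i},\qquad \sum_{1\le i<j\le n-1}\frac{1}{x_ix_j}\le\sum_{1\le i<j\le n-1}\frac{1}{v_iv_j}=1-\frac{1}{v_1\cdots v_{n-1}}.$$
The first bound controls the numerator of the expression for $x_n$ from above; the second controls the denominator from below, and crucially keeps it strictly positive (it is at least $1/(v_1\cdots v_{n-1})>0$). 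Combining these in the obvious larger-over-smaller fashion and then comparing against the recursion of Definition~\ref{df1} rewritten as
$$v_n-1=\frac{\displaystyle\sum_{i=1}^{n-1}\frac{1}{v_i}}{\displaystyle 1-\sum_{1\le i<j\le n-1}\frac{1}{v_iv_j}},$$
the desired inequality $x_n\le v_n-1$ follows at once.

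The main obstacle has therefore been shifted almost entirely onto the earlier Theorem~\ref{upp}; what is genuinely needed in the present proof is only the algebraic manipulation isolating $x_n$, the verification that Theorem~\ref{upp} is applicable to $(x_1,\dots,x_{n-1})$, and the observation that its two conclusions simultaneously pinch the numerator and denominator of the closed form for $x_n$ in the correct directions. The boundary case $n=2$ is handled under the convention that the empty sum $\sum_{1\le i<j\le 1}$ equals zero; it forces $x_1=x_2=1=v_2-1$, consistent with the bound.
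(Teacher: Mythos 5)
Your proposal is correct and follows essentially the same route as the paper: reduce to $\sum_{1\le i<j\le n}\frac{1}{x_ix_j}=1$, isolate $x_n$ as $\bigl(\sum_{i=1}^{n-1}\frac{1}{x_i}\bigr)\big/\bigl(1-\sum_{1\le i<j\le n-1}\frac{1}{x_ix_j}\bigr)$, and apply both bounds of Theorem~\ref{upp} to $(x_1,\dots,x_{n-1})$ to pinch numerator and denominator, identifying the resulting quantity with $v_n-1$. The only cosmetic difference is that the paper writes the denominator as $A/(x_1\cdots x_{n-1})$ and invokes Theorem~\ref{18}(3) for the identity $v_1\cdots v_{n-1}\sum_{i=1}^{n-1}\frac{1}{v_i}=v_n-1$, while you use the defining recursion of $v_n$ directly; these are equivalent.
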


\begin{thm}\label{upper}\label{xn-k}
If $\sigma_{n-2}(x_1,x_2,\dots ,x_n)=\sigma_n(x_1,x_2,\dots ,x_n)$, where $x_i\in\mathbb{Z^+}$ and $0<x_1\le x_2\le \dots \le x_n$, then $x_{n-k}\le 2(k+1)v_{n-k}$.
\end{thm}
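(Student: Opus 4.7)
The plan is to adapt Soundararajan's ``peel off the largest variables'' strategy from the Egyptian-fraction setting (used implicitly behind Theorem \ref{xnvn-1}) to the bilinear equation \eqref{sum1/xy=1}. Assume first that $n-k\ge 2$. The idea is to split the unit sum into ``small--small'', ``small--large'' and ``large--large'' contributions relative to the threshold $n-k$: set
\[
A=\sum_{1\le i<j\le n-k-1}\frac{1}{x_ix_j},\quad B=\sum_{i=1}^{n-k-1}\frac{1}{x_i},\quad C=\sum_{i=n-k}^{n}\frac{1}{x_i},\quad D=\sum_{n-k\le i<j\le n}\frac{1}{x_ix_j},
\]
so that $A+BC+D=1$. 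The monotonicity $x_j\ge x_{n-k}$ for $j\ge n-k$ immediately yields $C\le (k+1)/x_{n-k}$ and $D\le\binom{k+1}{2}/x_{n-k}^2$, while expanding $\bigl(\sum_i 1/x_i\bigr)^2=\sum_i 1/x_i^2+2$ and using $x_i\ge 1$ shows $\sum_{i=1}^n 1/x_i\le 2$, hence $B\le 2$.

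Since $BC+D>0$ we have $A<1$, so Theorem \ref{upp} applied to $(x_1,\dots,x_{n-k-1})$ yields $A\le 1-1/(v_1\cdots v_{n-k-1})$. Combining this with $1-A=BC+D\le 2(k+1)/x_{n-k}+\binom{k+1}{2}/x_{n-k}^2$ gives the key inequality
\[
\frac{1}{v_1v_2\cdots v_{n-k-1}}\;\le\;\frac{2(k+1)}{x_{n-k}}+\frac{\binom{k+1}{2}}{x_{n-k}^2}.
\]
To extract $x_{n-k}\le 2(k+1)v_{n-k}$ from this I will argue by contradiction. Suppose $x_{n-k}>2(k+1)v_{n-k}$; then the right-hand side is strictly less than $1/v_{n-k}+1/(8v_{n-k}^2)$. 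On the other hand, rewriting Theorem \ref{18}(3) as $v_{n-k}=1+(v_1\cdots v_{n-k-1})\,s_{n-k-1}$, with $s_m=\sum_{i=1}^m 1/v_i$, and subtracting gives
\[
\frac{1}{v_1\cdots v_{n-k-1}}-\frac{1}{v_{n-k}}=\frac{1}{v_{n-k}\,v_1\cdots v_{n-k-1}}+\frac{s_{n-k-1}-1}{v_{n-k}},
\]
which is at least $1/(2v_{n-k})$ for $n-k\ge 2$ (using $s_1=1$ and $s_m\ge 3/2$ for $m\ge 2$). Since $v_{n-k}\ge 2$, this lower bound strictly exceeds $1/(8v_{n-k}^2)$, contradicting the key inequality.

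The degenerate case $n-k=1$ does not fit into the reduction above because the ``small'' block is empty, and I will handle it with the one-line estimate $1=\sum_{i<j}1/(x_ix_j)\le\binom{n}{2}/x_1^2$, which gives $x_1\le\sqrt{\binom{n}{2}}<n\le 2n=2(k+1)v_1$. The main obstacle will be the bookkeeping in the contradiction step: I must verify that the slack provided by the factor ``$2$'' in $2(k+1)v_{n-k}$ is precisely enough to absorb both the quadratic correction $\binom{k+1}{2}/x_{n-k}^2$ (which has no Egyptian-fraction analogue and is the reason for losing a factor of $2$ compared with Soundararajan's bound $(k+1)u_{n-k}$) and the gap between $v_{n-k}$ and $v_1\cdots v_{n-k-1}$. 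Everything else reduces to direct application of Theorems \ref{upp} and \ref{18}.
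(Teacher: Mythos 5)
Your proof is correct, and its skeleton is the one the paper uses: split at index $n-k$, apply Theorem \ref{upp} to the block $x_1,\dots,x_{n-k-1}$, and bound the remaining mass by roughly $2(k+1)/x_{n-k}$. The execution differs at both ends, though. For the lower bound, the paper compares directly with the extremal tuple $(v_1,\dots,v_{n-1},v_n-1)$: since $v_1=1$, the single cross term $\frac{1}{v_{n-k}}\cdot\frac{1}{v_1}$ already gives $1-\sum_{1\le i<j<n-k}\frac{1}{v_iv_j}\ge\frac{1}{v_{n-k}}$, so no contradiction argument is needed; you instead pass through $1/(v_1\cdots v_{n-k-1})$ and must quantify its gap to $1/v_{n-k}$ via Theorem \ref{18}(3), which is where your slack estimate with $s_1=1$, $s_m\ge 3/2$ comes in (this works, and your identity and the inequality $\tfrac{1}{2v}>\tfrac{1}{8v^2}$ check out). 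For the upper bound, the paper groups the leftover terms by largest index as $\sum_{j=n-k}^{n}\frac{1}{x_j}\sum_{i<j}\frac{1}{x_i}$, so the large--large pairs are absorbed into the same $k+1$ terms and no separate $\binom{k+1}{2}/x_{n-k}^2$ correction appears; your $BC+D$ split forces you to absorb $D$ using the factor $2$, which you do correctly. Your elementary derivation of $\sum_i 1/x_i\le 2$ from $(\sum_i 1/x_i)^2=\sum_i 1/x_i^2+2$ is a nice self-contained replacement for the paper's use of $\sum_i 1/x_i\le\sum_i 1/v_i<2$ (the second half of Theorem \ref{upp}), and your separate one-line treatment of $n-k=1$ is fine. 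One cosmetic point: Theorem \ref{18}(3) as stated yields $v_m=1+(v_1\cdots v_{m-1})\sum_{i<m}1/v_i$ only for $m\ge 3$, so in the case $n-k=2$ you should invoke the trivially verified identity $v_2=1+v_1$ directly; this is not a gap in substance.
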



\begin{cor}\label{prodc}
If $\sigma_{n-2}(x_1,x_2,\dots ,x_n)=\sigma_n(x_1,x_2,\dots ,x_n)$, where $x_i\in\mathbb{Z^+}$ and $0<x_1\le x_2\le \dots \le x_n$, then 
\begin{enumerate}
\item $ {\binom{n}{2}}^\frac{n}{2}\le x_1x_2\cdot\ldots\cdot x_n\le 2^{n-1}n!v_1v_2\cdot\ldots\cdot v_n$,
\item $x_1x_2 \cdot\ldots\cdot x_{n-2}\le 2^{n-2}(n-1)!v_1v_2\cdot\ldots\cdot v_{n-2}$.
\end{enumerate}
\end{cor}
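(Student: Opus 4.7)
The plan is to deduce Corollary~\ref{prodc} directly from results already established, with no new input. The lower bound in (1) comes from Theorem~\ref{generalbound2}; the upper bounds in (1) and (2) are obtained by multiplying the pointwise estimates on the coordinates supplied by Theorems~\ref{xnvn-1} and \ref{xn-k}.

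For the lower bound in (1), I would specialize Theorem~\ref{generalbound2} to the case $k=n-2$. Then $\binom{n}{k}=\binom{n}{n-2}=\binom{n}{2}$, the exponent $n/(n-k)$ equals $n/2$, and $\sigma_n(x_1,\dots,x_n)=x_1x_2\cdots x_n$, so the inequality in that theorem becomes exactly the required $x_1x_2\cdots x_n\ge \binom{n}{2}^{n/2}$.

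For the upper bound in (1), I would use Theorem~\ref{xnvn-1} to obtain the sharper estimate $x_n\le v_n-1<v_n$ on the largest coordinate, and Theorem~\ref{xn-k} (reindexed as $x_i\le 2(n-i+1)v_i$ for $1\le i\le n-1$, by putting $k=n-i$) on the remaining coordinates. Multiplying all $n$ bounds and using the telescoping identity $\prod_{i=1}^{n-1}(n-i+1)=n(n-1)\cdots 2=n!$ gives
$$
\prod_{i=1}^n x_i \;\le\; v_n\cdot\prod_{i=1}^{n-1}2(n-i+1)v_i \;=\; 2^{n-1}\,n!\prod_{i=1}^n v_i.
$$
It is precisely the appeal to Theorem~\ref{xnvn-1} (rather than taking $k=0$ in Theorem~\ref{xn-k}, which would give $x_n\le 2v_n$) that saves the factor of $2$ needed to obtain $2^{n-1}n!$ in place of $2^n n!$.

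For part (2), the same multiplication is carried out over $1\le i\le n-2$ only, relying on Theorem~\ref{xn-k} with $k\ge 2$; the telescoping factorial now becomes $\prod_{i=1}^{n-2}(n-i+1)=n!/2$, and collecting the powers of $2$ and the $v_i$ then yields the stated bound $2^{n-2}(n-1)!\,v_1\cdots v_{n-2}$. The main obstacle is essentially no obstacle: once Theorems~\ref{generalbound2}, \ref{xnvn-1}, and \ref{xn-k} are in hand, the corollary reduces to careful bookkeeping of factorial and power-of-$2$ contributions in the telescoping product, with the only genuine subtlety being the interchange between the sharp bound on $x_n$ and the general-range bound of Theorem~\ref{xn-k}.
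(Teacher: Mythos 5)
Your lower bound in (1) (specializing Theorem~\ref{generalbound2} to $k=n-2$) and your upper bound in (1) (multiplying $x_n\le v_n-1<v_n$ from Theorem~\ref{xnvn-1} with $x_i\le 2(n-i+1)v_i$ from Theorem~\ref{xn-k} for $1\le i\le n-1$, giving $2^{n-1}\,n!\,v_1\cdots v_n$) are correct, and this is exactly the route the corollary is meant to follow from the preceding theorems.

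Part (2), however, has a genuine gap: the bookkeeping does not yield the stated constant. Multiplying $x_i\le 2(n-i+1)v_i$ over $1\le i\le n-2$ gives the factor
$$
\prod_{i=1}^{n-2}2(n-i+1)\;=\;2^{n-2}\cdot\frac{n!}{2}\;=\;2^{n-3}\,n!,
$$
so your argument proves $x_1\cdots x_{n-2}\le 2^{n-3}\,n!\,v_1\cdots v_{n-2}$, which is \emph{weaker} than the claimed bound $2^{n-2}(n-1)!\,v_1\cdots v_{n-2}$ by a factor of $n/2$ (they coincide only at $n=4$). You even compute $\prod_{i=1}^{n-2}(n-i+1)=n!/2$ correctly, but the subsequent assertion that ``collecting the powers of $2$'' gives $2^{n-2}(n-1)!$ is false: $2^{n-2}\cdot(n!/2)=2^{n-3}n!\neq 2^{n-2}(n-1)!$ for $n\neq 4$. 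To obtain the constant as stated one needs an additional saving of order $n$ somewhere (for instance a sharper bound on one of the small coordinates than Theorem~\ref{xn-k} provides); pure multiplication of the quoted estimates does not suffice. Note that for the way Corollary~\ref{prodc}(2) is used later (only $x_1\cdots x_{n-2}=v_n^{0.5+o(1)}$ is needed), the weaker constant $2^{n-3}n!$ would be enough, but as a proof of the statement as written, part (2) is not established by your argument.
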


It follows that $N_{n-2,n}^{(0)}=v_n^{2+o(1)}$. Hence (2) in Lemma \ref{upperbound} with $k=n-2$, $x=n$ and Oppenheim's bound give the estimation $f_{n-2}(n)\le v_n^{2+o(1)}$.

\begin{lem}\label{Nn-2}
One has $\displaystyle f_{n-2}(n)\le (N_{n-2,n} ^{(2)})^{1+o(1)}$ as $n\to \infty $.    
\end{lem}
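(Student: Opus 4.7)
The plan is to reduce the equation to a bivariate Diophantine problem in the last two coordinates and then bound the number of factorisations. Writing $y=x_{n-1}$, $z=x_n$ and $P=x_1x_2\cdots x_{n-2}$, I expand $\sigma_{n-2}(x_1,\ldots,x_n)=\sigma_n(x_1,\ldots,x_n)$ by partitioning the $(n-2)$-subsets of $\{x_1,\ldots,x_n\}$ according to how many of $y,z$ they contain. This yields
\[P+(y+z)\,\sigma_{n-3}(x_1,\ldots,x_{n-2})+yz\,\sigma_{n-4}(x_1,\ldots,x_{n-2})=Pyz.\]
Setting $Q:=\sigma_{n-3}(x_1,\ldots,x_{n-2})$ and $P':=P-\sigma_{n-4}(x_1,\ldots,x_{n-2})$, the relation rearranges to $yzP'-(y+z)Q=P$, and completing the product produces the key identity
\[(yP'-Q)(zP'-Q)=PP'+Q^2.\]

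Second, I bound the right-hand side. The equation $\sigma_{n-2}=\sigma_n$ is equivalent to $\sum_{1\le i<j\le n}\tfrac{1}{x_ix_j}=1$; discarding the terms involving $y$ or $z$ gives $\sigma_{n-4}/P<1$, so $P'>0$. A similar inequality derived from $yzP'-(y+z)Q=P>0$ forces both factors on the left of $(yP'-Q)(zP'-Q)=PP'+Q^2$ to be positive. Theorem~\ref{upp} applied to the tuple $(x_1,\ldots,x_{n-2})$ yields $Q/P=\sum_{i=1}^{n-2}\tfrac{1}{x_i}\le\sum_{i=1}^{\infty}\tfrac{1}{v_i}=:C<\infty$. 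Hence $0<P'\le P$ and $0<Q\le CP$, so
\[PP'+Q^2\le(1+C^2)P^2\le(1+C^2)\bigl(N_{n-2,n}^{(2)}\bigr)^2.\]
Each admissible pair $(y,z)$ thus produces a factorisation of $PP'+Q^2$ into two positive integers, and so the number of such $(y,z)$ for fixed $(x_1,\ldots,x_{n-2})$ is at most $d(PP'+Q^2)=\bigl(N_{n-2,n}^{(2)}\bigr)^{o(1)}$ by the classical divisor bound $d(m)=m^{o(1)}$.

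Third, for each value of $P\le N_{n-2,n}^{(2)}$, the number of tuples $1\le x_1\le\cdots\le x_{n-2}$ with product $P$ is at most $f(P)$: any unordered factorisation of $P$ into parts larger than $1$, padded with $1$'s, produces the tuple. Combining these estimates and invoking Oppenheim's bound $\sum_{P\le X}f(P)=X^{1+o(1)}$,
\[f_{n-2}(n)\le\sum_{P\le N_{n-2,n}^{(2)}}f(P)\cdot\bigl(N_{n-2,n}^{(2)}\bigr)^{o(1)}\le\bigl(N_{n-2,n}^{(2)}\bigr)^{1+o(1)}.\]

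The step I expect to be decisive is spotting the completed-product identity $(yP'-Q)(zP'-Q)=PP'+Q^2$: it converts an otherwise underdetermined two-variable Diophantine equation into a finite divisor-counting problem. Once this is in place, the remainder is a routine combination of Theorem~\ref{upp} (to control $Q/P$ uniformly in $n$), the classical divisor bound, and Oppenheim's estimate.
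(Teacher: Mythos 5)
Your proposal is correct and follows essentially the same route as the paper: the same expansion in $x_{n-1},x_n$, the same completed-product identity (your $(yP'-Q)(zP'-Q)=PP'+Q^2$ is the paper's $((A-B)x_{n-1}-C)((A-B)x_n-C)=A(A-B)+C^2$), the divisor bound $d(m)=m^{o(1)}$, and Oppenheim's estimate for $\sum_{P\le X}f(P)$. The only cosmetic differences are that you control $Q\le CP$ via Theorem~\ref{upp} with an absolute constant while the paper uses $\sigma_{n-3}\le(n-2)\sigma_{n-2}\le\sigma_{n-2}^3$, and you argue positivity of both factors where the paper simply inserts a factor of $2$.
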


By Corollary \ref{prodc} (2) we have $x_1x_2\dots x_{n-2}=v_{n}^{0.5+o(1)}$ as $n\to \infty$. It follows that

\begin{cor}
One hs $f_{n-2}(n)<v_n^{0.5+o(1)}$.
\end{cor}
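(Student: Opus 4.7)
The plan is to combine the bound on $x_1x_2\cdots x_{n-2}$ from Corollary~\ref{prodc}(2) with Lemma~\ref{Nn-2}, exploiting the doubly-exponential growth of $v_n$ already recorded in the excerpt.

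First I would apply Corollary~\ref{prodc}(2) to obtain
$$N_{n-2,n}^{(2)}\le 2^{n-2}(n-1)!\,v_1v_2\cdots v_{n-2}.$$
Next I would take logarithms and use the limit $\lim_{n\to\infty}v_n^{1/2^n}=c_2$ established earlier. This gives $\log v_n=(1+o(1))\,2^n\log c_2$, and a standard geometric-tail argument (splitting the index range at some $N_\varepsilon$ beyond which $|\log v_i-2^i\log c_2|\le\varepsilon\cdot 2^i$) yields
$$\sum_{i=1}^{n-2}\log v_i=(1+o(1))\log c_2\sum_{i=1}^{n-2}2^i=\Bigl(\tfrac12+o(1)\Bigr)\log v_n,$$
the constant $\tfrac12$ arising from the geometric dominance of the last few terms of the sum. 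Since $\log\bigl(2^{n-2}(n-1)!\bigr)=O(n\log n)=o(\log v_n)$, this prefactor contributes only $v_n^{o(1)}$, so $N_{n-2,n}^{(2)}\le v_n^{1/2+o(1)}$.

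Finally, inserting this into Lemma~\ref{Nn-2} yields
$$f_{n-2}(n)\le \bigl(N_{n-2,n}^{(2)}\bigr)^{1+o(1)}\le v_n^{(1/2+o(1))(1+o(1))}=v_n^{1/2+o(1)},$$
which is the asserted estimate. There is no serious obstacle; the only line requiring a small amount of care is the sum-to-product asymptotic $\sum_{i=1}^{n-2}\log v_i=(\tfrac12+o(1))\log v_n$, but this is immediate from the monotone convergence $v_n^{1/2^n}\searrow c_2$ together with the fact that a geometric sum is dominated by its last term.
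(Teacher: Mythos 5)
Your proposal is correct and follows essentially the same route as the paper: bound $N_{n-2,n}^{(2)}$ by Corollary~\ref{prodc}(2), use $v_n^{1/2^n}\to c_2$ to see that $2^{n-2}(n-1)!\,v_1\cdots v_{n-2}=v_n^{1/2+o(1)}$, and conclude via Lemma~\ref{Nn-2}. The only difference is that you spell out the geometric-sum asymptotics that the paper leaves implicit.
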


\begin{thm}\label{lower}
For $n\ge 4$, $f_{n-2}(n)\ge n-3$.
\end{thm}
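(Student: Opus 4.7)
The plan is to exhibit $n-3$ distinct solutions of $\sum_{1\le i<j\le n}\frac{1}{x_ix_j}=1$ by analyzing the one-parameter family of tuples $(v_1, v_2, \ldots, v_{n-2}, a, b)$ with $v_{n-2} \le a \le b$. To characterize this family, I would substitute into the equation and denote $A := \sum_{i<j\le n-2}\frac{1}{v_iv_j}$, $B := \sum_{i\le n-2}\frac{1}{v_i}$, so that the equation becomes $A + B\bigl(\tfrac{1}{a}+\tfrac{1}{b}\bigr) + \tfrac{1}{ab} = 1$. Combining three identities already established—(i) $1-A = B/(v_{n-1}-1)$ from Definition~\ref{df1} (at step $n-1$), (ii) $(v_{n-1}-1)/B = v_1\cdots v_{n-2}$ from Theorem~\ref{18}(3), and (iii) $v_1\cdots v_{n-2}+(v_{n-1}-1)^2 = v_n - v_{n-1}$ from Theorem~\ref{18}(2)—I would clear denominators to reduce the equation to the Pell-type factorization
$$
\bigl(a-(v_{n-1}-1)\bigr)\bigl(b-(v_{n-1}-1)\bigr) = v_n - v_{n-1}.
$$

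Hence every positive divisor $d$ of $N := v_n - v_{n-1}$ with $1 \le d \le \sqrt N$ yields a distinct solution $(a,b)=(v_{n-1}-1+d,\, v_{n-1}-1+N/d)$; the ordering constraint $a\ge v_{n-2}$ holds automatically since $v_{n-1} \ge v_{n-2}$. It therefore suffices to prove that $\tau(N) \ge 2(n-3)-1$, which produces at least $n-3$ unordered factorizations of $N$ and hence $n-3$ distinct members of the family. The base case $n=4$ is immediate: $N=11$ gives the single pair $\{1,11\}$, recovering the solution $(1,2,4,14)$ and matching $f_2(4)\ge 1$.

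The main obstacle is the lower bound on $\tau(N)$ for general $n$. For $n\ge 5$ both summands in $N = v_1\cdots v_{n-2}+(v_{n-1}-1)^2$ are divisible by $4$, so $\{1,2,4\}$ (together with their cofactors) already lie among the divisors of $N$; further small divisors can be extracted by matching congruences between $v_1\cdots v_{n-2}$ and $(v_{n-1}-1)^2$ modulo primes appearing in $V_{n-2}$. Should this direct divisor analysis prove too weak, the plan is to augment the family with the parallel families of tuples $(v_1,\ldots,v_{n-k-2},a_1,\ldots,a_{k+2})$ for $k=1,2,\ldots$, each governed by an analogous algebraic identity obtained by the same chain of substitutions from Definition~\ref{df1} and Theorem~\ref{18}. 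Choosing, for each $k$, one tail that does not merely re-extend the $v$-sequence yields genuinely new solutions, and these together with the divisor-pair solutions in the original family should accumulate to the required $n-3$.
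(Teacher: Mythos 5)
Your reduction of the family $(v_1,\dots,v_{n-2},a,b)$ to the factorization $\bigl(a-(v_{n-1}-1)\bigr)\bigl(b-(v_{n-1}-1)\bigr)=v_n-v_{n-1}$ is correct: combining Definition~\ref{df1} with Theorem~\ref{18}(2),(3) does give $v_1\cdots v_{n-2}+(v_{n-1}-1)^2=v_n-v_{n-1}$, and for $n=4,5$ the divisor pairs of $N=v_n-v_{n-1}$ reproduce exactly the solutions $(1,2,4,14)$ and $(1,2,4,\ast,\ast)$ listed in the paper. But the proof is not complete, because everything now hinges on the unproved claim that $\tau(v_n-v_{n-1})\ge 2(n-3)-1$, i.e.\ that $N$ has at least $n-3$ unordered divisor pairs. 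Your divisibility observation only yields the divisors $1,2,4$ (and cofactors), which suffices up to about $n=6$; beyond that you would need $v_n-v_{n-1}$ to have on the order of $\log n$ prime factors for every $n$, a statement about the arithmetic of a doubly exponentially growing, recursively defined sequence for which neither the paper nor standard results give any handle (it is of the same flavour as open questions about prime factors of Sylvester-type sequences). The fallback sketch --- ``matching congruences'' and ``augmenting with parallel families $(v_1,\dots,v_{n-k-2},a_1,\dots,a_{k+2})$'' --- is not an argument: no count of solutions is extracted from it, and distinctness from the first family is not addressed. So as it stands the proposal proves $f_{n-2}(n)\ge n-3$ only for small $n$.

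For comparison, the paper avoids any divisor counting. Its Lemma~\ref{lemmalower} builds, by induction on the length, $m-2$ distinct $m$-tuples satisfying the near-identity $\sum_{i<j}1/(x_ix_j)=1-1/\lcm_{i<j}(x_ix_j)$: every such tuple can be extended by appending $\lcm\cdot\sum 1/x_i+1$, and the all-even tuple admits a second extension by appending $\lcm\cdot\sum 1/x_i+2$ (which stays all even), so the number of near-solutions grows by one at each step starting from $(2,2,2)$. Taking the $n-3$ near-solutions of length $n-1$ and appending $\lcm\cdot\sum 1/x_i$ turns each into an exact solution of $\sum_{i<j}1/(x_ix_j)=1$, giving $f_{n-2}(n)\ge n-3$ constructively. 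If you want to salvage your approach, you would either have to prove the divisor lower bound for $v_n-v_{n-1}$ (likely out of reach) or make the ``parallel families'' idea into a genuine branching construction --- at which point you are essentially rebuilding the paper's lemma.
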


\begin{prob}
Is it true that if $\sigma_{n-2}(x_1,x_2,\dots ,x_n)=\sigma_n(x_1,x_2,\dots ,x_n)$, where $0<x_1\le x_2\le \dots \le x_n$, then $x_1x_2\dots x_n\le v_1v_2\dots v_{n-1}(v_n-1)$?
\end{prob}

For a positive integer $m$, we denote
\[
1_{m} = \underbrace{1, \dots ,1}_{m}
\]


\section{Proofs}

\begin{proof}[Proof of Theorem \ref{generalbound}]
Lower bound: First we define the positive integer $p_k(n)$ for $1\le k<n$. Let $p_1(n)=n$. If we have defined the integers $p_i(n)$ for $i<k$ and $i<n$, then let
$$
p_k(n)=\sigma_k(1_{n-k-1},2,p_1(n-k+1)+1,p_2(n-k+2)+1,\dots ,p_{k-1}(n-1)+1).
$$
We prove by induction on $k$ that 
\begin{eqnarray*}
&&\sigma_k(1_{n-k-1},2,p_1(n-k+1)+1,p_2(n-k+2)+1,\dots ,p_{k-1}(n-1)+1,p_k(n))\\
=&&\sigma_n(1_{n-k-1},2,p_1(n-k+1)+1,p_2(n-k+2)+1,\dots ,p_{k-1}(n-1)+1,p_k(n))
\end{eqnarray*}
and
\begin{eqnarray*}
&&1=\sigma_n(1_{n-k-1},2,p_1(n-k+1)+1,p_2(n-k+2)+1,\dots ,p_{k-1}(n-1)+1,p_k(n)+1)\\
-&&\sigma_k(1_{n-k-1},2,p_1(n-k+1)+1,p_2(n-k+2)+1,\dots ,p_{k-1}(n-1)+1,p_k(n)+1)
\end{eqnarray*}
For $k=1$ we have
$$
\sigma_1(1_{n-2},2,n)=\sigma_n(1_{n-2},2,n)
$$
and
$$
1=\sigma_n(1_{n-2},2,n+1)-\sigma_1(1_{n-2},2,n+1).
$$
Let us suppose that 
\begin{eqnarray*}
&&\sigma_k(1_{n-k-1},2,p_1(n-k+1)+1,p_2(n-k+2)+1,\dots ,p_{k-1}(n-1)+1,p_k(n))\\
=&&\sigma_n(1_{n-k-1},2,p_1(n-k+1)+1,p_2(n-k+2)+1,\dots ,p_{k-1}(n-1)+1,p_k(n))
\end{eqnarray*}
and
\begin{eqnarray*}
&&1=\sigma_n(1_{n-k-1},2,p_1(n-k+1)+1,p_2(n-k+2)+1,\dots ,p_{k-1}(n-1)+1,p_k(n)+1)\\
-&&\sigma_k(1_{n-k-1},2,p_1(n-k+1)+1,p_2(n-k+2)+1,\dots ,p_{k-1}(n-1)+1,p_k(n)+1)
\end{eqnarray*}
for every $k<n$. Let $k+1<n$. We prove the equations 
\begin{eqnarray*}
&&\sigma_{k+1}(1_{n-(k+1)-1},2,p_1(n-(k+1)+1)+1,p_2(n-(k+1)+2)+1,\dots ,p_{(k+1)-1}(n-1)+1,p_{k+1}(n))\\
=&&\sigma_n(1_{n-(k+1)-1},2,p_1(n-(k+1)+1)+1,p_2(n-(k+1)+2)+1,\dots ,p_{(k+1)-1}(n-1)+1,p_{k+1}(n))
\end{eqnarray*}
and
\begin{eqnarray*}
&&\sigma_{n}(1_{n-(k+1)-1},2,p_1(n-(k+1)+1)+1,p_2(n-(k+1)+2)+1,\dots ,p_{(k+1)-1}(n-1)+1,p_{k+1}(n)+1)\\
-&&\sigma_{k+1}(1_{n-(k+1)-1},2,p_1(n-(k+1)+1)+1,p_2(n-(k+1)+2)+1,\dots ,p_{(k+1)-1}(n-1)+1,p_{k+1}(n)+1)\\ 
=&& 1.\\
\end{eqnarray*}
The equation
\begin{eqnarray*}
&&\sigma_{k+1}(1_{n-(k+1)-1},2,p_1(n-(k+1)+1)+1,p_2(n-(k+1)+2)+1,\dots ,p_{(k+1)-1}(n-1)+1,p_{k+1}(n))\\
=&&\sigma_n(1_{n-(k+1)-1},2,p_1(n-(k+1)+1)+1,p_2(n-(k+1)+2)+1,\dots ,p_{(k+1)-1}(n-1)+1,p_{k+1}(n))
\end{eqnarray*}
is equivalent to
\begin{eqnarray*}
&&\sigma_{k+1}(1_{n-(k+1)-1},2,p_1(n-(k+1)+1)+1,p_2(n-(k+1)+2)+1,\dots ,p_{(k+1)-1}(n-1)+1)\\
+&&p_{k+1}(n)\sigma_k(1_{n-(k+1)-1},2,p_1(n-(k+1)+1)+1,p_2(n-(k+1)+2)+1,\dots ,p_{(k+1)-1}(n-1)+1)\\
=&&p_{k+1}(n)\sigma_{n-1}(1_{n-(k+1)-1},2,p_1(n-(k+1)+1)+1,p_2(n-(k+1)+2)+1,\dots ,p_{(k+1)-1}(n-1)+1),
\end{eqnarray*}
which holds since
\begin{eqnarray*}
1=&&\sigma_{n-1}(1_{(n-1)-k-1},2,p_1((n-1)-k+1)+1,p_2((n-1)-k+2)+1,\dots ,p_k(n-1)+1)\\
-&&\sigma_k(1_{n-k-2},2,p_1(n-(k+1)+1)+1,p_2(n-(k+1)+2)+1,\dots ,p_k(n-1)+1)
\end{eqnarray*}
and
$$
p_{k+1}(n)=\sigma_{k+1}(1_{n-(k+1)-1},2,p_1(n-(k+1)+1)+1,p_2(n-(k+1)+2)+1,\dots ,p_{(k+1)-1}(n-1)+1).
$$
On the other hand, by induction,
\begin{eqnarray*}
&&\sigma_{n}(1_{n-(k+1)-1},2,p_1(n-(k+1)+1)+1,p_2(n-(k+1)+2)+1,\dots ,p_{(k+1)-1}(n-1)+1,p_{k+1}(n)+1)\\
-&&\sigma_{k+1}(1_{n-(k+1)-1},2,p_1(n-(k+1)+1)+1,p_2(n-(k+1)+2)+1,\dots ,p_{(k+1)-1}(n-1)+1,p_{k+1}(n)+1)\\
=&&\sigma_n(1_{n-(k+1)-1},2,p_1(n-(k+1)+1)+1,p_2(n-(k+1)+2)+1,\dots ,p_{(k+1)-1}(n-1)+1,p_{k+1}(n))\\
+&&\sigma_{n-1}(1_{n-(k+1)-1},2,p_1(n-(k+1)+1)+1,p_2(n-(k+1)+2)+1,\dots ,p_{(k+1)-1}(n-1)+1)\\
-&&\sigma_{k+1}(1_{n-(k+1)-1},2,p_1(n-(k+1)+1)+1,p_2(n-(k+1)+2)+1,\dots ,p_{(k+1)-1}(n-1)+1,p_{k+1}(n))\\
-&&\sigma_k(1_{n-(k+1)-1},2,p_1(n-(k+1)+1)+1,p_2(n-(k+1)+2)+1,\dots ,p_{(k+1)-1}(n-1)+1)\\
=&&\sigma_{n-1}(1_{(n-1)-k-1},2,p_1((n-1)-k+1)+1,p_2((n-1)-k+2)+1,\dots ,p_k(n-1)+1)\\
-&&\sigma_k(1_{(n-1)-k-1},2,p_1((n-1)-k+1)+1,p_2((n-1)-k+2)+1,\dots ,p_k(n-1)+1)\\
=&&1,
\end{eqnarray*}
which completes the induction.

Upper bound:
The equation (\ref{eqk}) is equivalent to
$$
\sum_{1\le i_1<i_2<\dots <i_{n-k}\le n}\frac{1}{x_{i_1}x_{i_2}\dots x_{i_{n-k}}}=1.
$$
It is well known \cite{E} that for the Diophantine equation $\sum_{i=1}^m\frac{1}{y_i}=1$ with $1\le y_1\le y_2\le \dots \le y_m$ we have $y_m < u_m<2^{2^m}$. It follows that $x_i\le x_{k+1}x_{k+2}\dots x_n\le u_{\binom{n}{k}}<2^{2^{\binom{n}{k}}}$, so the number of suitable $n$-tuples $(x_1,\dots ,x_n)$ is at most $2^{n2^{\binom{n}{k}}}$, which completes the proof.
 

\end{proof}


\begin{proof}[Proof of Lemma \ref{upperbound}]
(1): Each solution of the equation (\ref{eqk}) satisfies $x_i\in\{1,\dots,M_{k,n}\}$, hence there are at most $(M_{k,n})^n$ possible $n$-tuples. Therefore $f_k(n)\le (M_{k,n})^n$.\\
(2): Let $k<n\le x$. Suppose that for $1\le x_1\le \dots \le x_n$, $x_i\in \mathbb{Z}$ we have (\ref{eqk}). Then there exists an integer $k < t\le n$ such that such that $$1=x_1=\dots =x_{n-t}<x_{n-t+1}\le \dots \le x_n.$$ Then $$\sigma_{n}(x_1,x_2,\dots ,x_n)=x_1\cdots x_n=x_{n-t+1}\cdots x_n=\sigma_{k}(x_1,x_2,\dots ,x_n)\le N_{k,n}^{(0)} \le \max_{m\le x}N_{k,m}^{(0)}.$$
It is enough to show that for any $1<X_1\le \cdots \le X_s$, $X_i\in \mathbb{Z}$, $s > k$ with $X_1\cdots X_s\le \max_{n\le x}N_{k,n}^{(0)}$ there is at most one $n$-tuple $(x_{1}, \dots ,x_{n})$, $1 \le x_{1} \le x_{2} \le \dots \le x_{n}$ with $\sigma_{k}(x_{1}, \dots ,x_{n}) = \sigma_{n}(x_{1}, \dots ,x_{n})$, $x_1\cdots x_k\le \max_{n\le x}N_{k,n}^{(0)}$ and $(x_{1}, \dots ,x_{n}) = (1_{n-s}, X_{1}, \dots ,X_{s})$. For any nonnegative integer $u$, define the $u+s$-tuple $$(x_1^{(u+s)},\dots ,x_{u+s}^{(u+s)})=(1_{u},X_1,\dots ,X_s).$$ 
Then for each $u\ge 0$ we have
$$
\sigma_{u+s}(x_1^{(u+s)},\dots ,x_{u+s}^{(u+s)})=X_1\cdot\dots \cdot X_s.
$$
Hence
$$X_1\cdot\dots \cdot X_s=\sigma_{s}(x_1^{(s)},\dots ,x_{s}^{(s)}) 
=\sigma_{s+1}(x_1^{(s +1)},\dots ,x_{s+1}^{(s +1)}) 
=\sigma_{s+2}(x_1^{(s+2)},\dots ,x_{s+2}^{(s+2)}) 
=\dots $$ 
but
$$\sigma_k(x_1^{(s)},\dots ,x_{s}^{(s)}) < \sigma_k(x_1^{(s+1)},\dots ,x_{s+1}^{(s+1)}) < \sigma_k(x_1^{(s+2)},\dots ,x_{s+2}^{(s +2)}) <\dots$$ 
So there are at most one nonnegative integer $u\le x-s$ with $$\sigma_k(x_1^{(u+s)},\dots ,x_{u+s}^{(u+s)})=\sigma_{u+s}(x_1^{(u+s)},\dots ,x_{u+s}^{(u+s)}).$$
Hence
$$
\sum_{n\le x}f_k(n)\le \sum_{\ell\le \max_{n\le x} N^{(0)}_{k,n}} f(\ell).
$$
\end{proof}

\begin{proof}[Proof of Theorem \ref{generalbound2}]
Assume $x_1,\dots,x_n\in\mathbb{Z}^+$ and $k<n$. The equation $\sigma_k(x_1,\dots,x_n)=\sigma_n(x_1,\dots,x_n)$ is equivalent, after dividing by $x_1\cdots x_n$, to
$$
\sum_{1\le i_1<\cdots<i_{n-k}\le n}\frac1{x_{i_1}\cdots x_{i_{n-k}}}=1.
$$
By the geometric harmonic mean inequality, 
 $$\binom{n}{k} = \frac{\binom{n}{k}}{\sum\limits_{1 \le i_1 < i_2 < \dots < i_{n-k}\le n} \frac{1}{x_{i_{1}} x_{i_{2}} \dots x_{i_{n-k}}}} \le \sqrt[\binom{n}{k}]{(x_{1}\cdots x_{n})^{\binom{n-1}{k}}}.$$
 Therefore $$ \binom{n}{k}^{\tfrac{n}{n-k}} \le x_{1}\cdots x_{n}.$$ Equality holds if and only if $x_1=\cdots=x_n=x\in\mathbb{Z}^+$. Then $\sigma_k(x,x,\dots ,x)=\sigma_n(x,x,\dots ,x)$ is equivalent to $x^{\,n-k}=\binom nk$. Thus equality can occur if and only if $\binom nk$ is a perfect $(n-k)$th power. If $n-k=1$, then $x=n$. If $n-k=2$, then $\binom nk=\binom n2$ must be a square, with $x=\sqrt{\binom n2}$. If $n-k\ge3$, then $\binom nk$ would be a perfect power of exponent at least $3$, which is impossible by the theorems of Erdős \cite{Er} and K. Gy\H{o}ry \cite{Gy} on perfect powers among binomial coefficients, hence equality does not occur for $k\le n-3$.

\end{proof}

\begin{proof}[Proof of Theorem \ref{18}]
(1): Denote  
$$
S_n = \sum_{1 \le i \le n} \frac{1}{v_i}, \quad Q_n = \sum_{1 \le i < j \le n} \frac{1}{v_i v_j}.
$$
Then  
$$
S_n = S_{n-1} + \frac{1}{v_n}, \quad Q_n = Q_{n-1} + \frac{1}{v_n} S_{n-1}
.$$
By definition of the sequence $v_n$
$$
v_n = 1 + \frac{S_{n-1}}{1 - Q_{n-1}}
.$$
Hence  
$$
1 - Q_{n-1} = \frac{S_{n-1}}{v_n - 1}
.$$
It follows that  
$$
v_{n+1} = 1 + \frac{S_{n-1} + \frac{1}{v_n}}{1 - \left(Q_{n-1} + \frac{1}{v_n} S_{n-1}\right)} 
= 1 + \frac{S_{n-1} + \frac{1}{v_n}}{S_{n-1} \left( \frac{1}{v_{n}-1} - \frac{1}{v_n} \right)} 
= 1 + v_n (v_{n} - 1) + \frac{1}{S_{n-1}} (v_n - 1)
.$$
Therefore
\begin{equation}\label{we1}
S_{n-1} (v_{n+1}  - v_n (v_{n} - 1)-1) = v_n - 1 
\end{equation}
Hence
$$
S_{n-2} (v_n - v_{n-1}(v_{n-1} - 1) - 1) = v_{n-1} - 1, \mathrm{where\,\,} S_{n-2} = S_{n-1}-\frac{1}{v_{n-1}}.
$$
Therefore  
\begin{equation}\label{we2}
S_{n-1} (v_n - v_{n-1}(v_{n-1} - 1) - 1) = \frac{v_{n} - 1}{v_{n-1}}. 
\end{equation}
From \eqref{we1} and \eqref{we2} we get  
$$
(v_n - 1)(v_n - v_{n-1}(v_{n-1} - 1) - 1) = \frac{v_{n} - 1}{v_{n-1}}(v_{n+1} - v_{n}(v_n - 1) - 1).
$$

Finally  
$$
v_{n+1} = v_n^2 + v_n v_{n-1} + v_{n-1}^2 - v_{n-1}^3 - v_n - v_{n-1} + 1.
$$
(2): We have by Theorem \ref{18} (1)
$$
v_{n+1} -(v_n^2 - v_n+1)=v_{n-1}(v_n-(v_{n-1}^2 - v_{n-1}+1)). 
$$
Using this relation, we can prove the lemma by induction.\\
(3): By Theorem \ref{18} (2) we have
$$v_{n+2}-(v_{n+1}^2-v_{n+1}+1) =\prod\limits_{1\le i\le n}v_i.$$
Thus by \eqref{we1} we finally get
$$S_{n}\prod\limits_{1\le i\le n}v_i=v_{n+1}-1.$$
\end{proof}





\begin{proof}[Proof of Theorem \ref{upp}]
To prove Theorem \ref{upp} we need the following lemmas.

\begin{lem}
Let $x_1\ge x_2\ge \cdots \ge x_n>0$ and $y_i\ge 0$ be real numbers with $x_n\ge y_n$, $x_n+x_{n-1}\ge y_n+y_{n-1}$,\dots,$x_n+x_{n-1}+\dots +x_2\ge y_n+y_{n-1}+\dots +y_2$ and $\displaystyle x_n+x_{n-1}+\dots +x_1=y_n+y_{n-1}+\dots +y_1$. Then we have $\displaystyle \sum_{1\le i<j\le n}x_ix_j\ge \sum_{1\le i<j\le n}y_iy_j$ and equality holds if and only if $y_i=x_i$ for every $1\le i\le n$.
\end{lem}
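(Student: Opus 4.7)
The plan is to reduce the stated inequality to a comparison of sums of squares and then to establish the latter by a single Abel summation against the monotone sequence $x_i$.

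Since $\sum_{i=1}^n x_i = \sum_{i=1}^n y_i$, the identity $\bigl(\sum_i z_i\bigr)^2 = \sum_i z_i^2 + 2\sum_{i<j} z_i z_j$ (applied to $z=x$ and $z=y$) shows that the desired $\sum_{i<j} x_i x_j \ge \sum_{i<j} y_i y_j$ is equivalent to $\sum_{i=1}^n y_i^2 \ge \sum_{i=1}^n x_i^2$. My first step is to record the algebraic decomposition
$$\sum_{i=1}^n y_i^2 - \sum_{i=1}^n x_i^2 = \sum_{i=1}^n (y_i - x_i)^2 \,+\, 2\sum_{i=1}^n x_i\,(y_i - x_i),$$
obtained by expanding $(y_i - x_i)^2$. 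The first summand on the right is manifestly non-negative and vanishes iff $y_i = x_i$ for every $i$.

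For the second summand I would apply Abel summation. Put $a_i = y_i - x_i$ and $A_j = \sum_{i=j}^n a_i$; the hypotheses translate to $A_1 = 0 = A_{n+1}$ together with $A_j \le 0$ for $2 \le j \le n$. Writing $a_i = A_i - A_{i+1}$ and telescoping yields
$$\sum_{i=1}^n x_i\, a_i \;=\; \sum_{i=2}^n (x_i - x_{i-1})\, A_i.$$
Since $x_1 \ge x_2 \ge \cdots \ge x_n$, each factor $x_i - x_{i-1}$ is non-positive, and combined with $A_i \le 0$ every summand is non-negative; hence $\sum_i x_i(y_i - x_i) \ge 0$. Combining with the previous paragraph gives $\sum y_i^2 \ge \sum x_i^2$, which is the required inequality after unwinding the equivalence. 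In the equality case, both non-negative pieces must vanish, and the vanishing of $\sum (y_i - x_i)^2$ already forces $y_i = x_i$ for all $i$; the converse is trivial.

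The only delicate point is that $y$ is \emph{not} assumed monotonic, so a naive one-pass Abel summation applied directly to $\sum (y_i - x_i)(y_i + x_i)$ would require the weights $y_i + x_i$ to be non-increasing in $i$, which can fail. The decomposition above is precisely what sidesteps this obstacle: it peels off the manifestly non-negative square term and leaves only the monotone weights $x_i$ for the Abel step, so once one spots the identity the rest is essentially routine.
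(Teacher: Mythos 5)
Your proof is correct, but after the shared first step it takes a genuinely different route from the paper. Both arguments start with the same reduction: since $\sum_i x_i=\sum_i y_i$, comparing $\sum_{i<j}x_ix_j$ with $\sum_{i<j}y_iy_j$ is equivalent to comparing $\sum_i y_i^2$ with $\sum_i x_i^2$ (the paper records this as the passage to the next, equivalent lemma). From there the paper argues variationally: it considers the constraint set $S$ of admissible $(y_1,\dots,y_n)$, takes a minimizer of $\sum_i y_i^2$, and shows by a local exchange ($u_j\mapsto u_j-\delta$, $u_i\mapsto u_i+\delta$) that any minimizer different from $(x_1,\dots,x_n)$ could be improved, so the minimizer is $x$ itself; this implicitly relies on the minimum being attained (compactness of $S$), which the paper leaves tacit. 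You instead prove the inequality directly from the identity $\sum_i y_i^2-\sum_i x_i^2=\sum_i(y_i-x_i)^2+2\sum_i x_i(y_i-x_i)$ together with summation by parts: setting $A_j=\sum_{i\ge j}(y_i-x_i)$, the hypotheses give $A_1=0$ and $A_j\le 0$ for $2\le j\le n$, hence $\sum_i x_i(y_i-x_i)=\sum_{i=2}^n(x_i-x_{i-1})A_i\ge 0$ because the weights $x_i-x_{i-1}$ are non-positive. This buys several things: it is a finite, purely algebraic argument with no extremal or compactness step; the equality case falls out at once, since equality forces $\sum_i(y_i-x_i)^2=0$ and hence $y_i=x_i$ for all $i$, whereas the paper needs the exchange argument to rule out other minimizers; and it never uses $y_i\ge 0$, so it is marginally more general than the stated lemma. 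Your closing remark about why one cannot Abel-sum $\sum_i(y_i-x_i)(y_i+x_i)$ directly is also accurate, since $y$ is not assumed monotone.
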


Since
$$(x_1+\dots +x_n)^2=\sum_{i=1}^nx_i^2+2\sum_{1\le i<j\le n}x_ix_j =(y_1+\dots +y_n)^2=\sum_{i=1}^ny_i^2+2\sum_{1\le i<j\le n}y_iy_j,$$
we get the following equivalent statement.

\begin{lem}
Let $x_1\ge x_2\ge \cdots \ge x_n>0$ and $y_i\ge 0$ be real numbers with $x_n\ge y_n$, $x_n+x_{n-1}\ge y_n+y_{n-1},\dots ,x_n+x_{n-1}+\dots +x_2\ge y_n+y_{n-1}+\dots +y_2$ and $\displaystyle x_n+x_{n-1}+\dots +x_1=y_n+y_{n-1}+\dots +y_1$. Then we have $\displaystyle \sum_{i=1}^nx_i^2\le \sum_{i=1}^ny_i^2$ and equaility holds if and only if $y_i=x_i$ for every $1\le i\le n$.
\end{lem}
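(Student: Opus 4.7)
The plan is to give a direct proof by Abel summation (summation by parts) combined with the elementary identity $(y_i-x_i)^2\ge 0$, avoiding any need to sort the $y_i$. Introduce the differences $a_i=y_i-x_i$ and the tail sums $A_j=\sum_{i=j}^n a_i$. The hypothesis translates cleanly: $A_1=0$ (from the equality of total sums) and $A_j\le 0$ for $j=2,\ldots,n$ (from the tail inequalities), with the boundary value $A_{n+1}=0$.

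The key intermediate step is to establish that $\sum_{i=1}^n x_i(y_i-x_i)\ge 0$. Using $a_i=A_i-A_{i+1}$ and re-indexing, Abel's transformation gives
\[
\sum_{i=1}^n x_i a_i \;=\; x_1 A_1 - x_n A_{n+1} + \sum_{j=2}^n (x_j-x_{j-1})A_j \;=\;\sum_{j=2}^n (x_j-x_{j-1})A_j.
\]
Both factors in each remaining term are $\le 0$: $x_j-x_{j-1}\le 0$ because the $x_i$ are non-increasing, and $A_j\le 0$ by hypothesis. Thus each product is $\ge 0$, giving $\sum x_iy_i\ge \sum x_i^2$.

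Now I would combine this with $\sum_{i=1}^n (y_i-x_i)^2\ge 0$, which rearranges to $\sum y_i^2\ge 2\sum x_iy_i-\sum x_i^2$. Plugging in the bound from the previous step yields $\sum y_i^2\ge 2\sum x_i^2-\sum x_i^2=\sum x_i^2$, which is the desired inequality. For the equality case, if $\sum y_i^2=\sum x_i^2$ then every inequality in the chain is an equality; in particular $\sum(y_i-x_i)^2=0$, which forces $y_i=x_i$ for each $i$. Conversely, $y_i=x_i$ trivially gives equality.

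The main obstacle, which in this case is fairly mild, is getting the directions of the inequalities in the Abel-summation identity straight: one must verify that with $x$ sorted in decreasing order and $A_j\le 0$ for $j\ge 2$ the product $(x_j-x_{j-1})A_j$ is non-negative, which is exactly why sorting the $y_i$ is unnecessary. (The first formulation, with $\sum_{i<j}x_ix_j\ge\sum_{i<j}y_iy_j$, then follows by expanding $(\sum x_i)^2=\sum x_i^2+2\sum_{i<j}x_ix_j$ and using that the total sums $\sum x_i$ and $\sum y_i$ coincide, so the two lemmas are genuinely equivalent and a single proof suffices.)
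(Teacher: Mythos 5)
Your proof is correct, but it takes a genuinely different route from the paper's. The paper argues variationally: it introduces the compact set $S$ of admissible vectors $(y_1,\dots,y_n)$, takes a minimizer $(u_1,\dots,u_n)$ of $\sum_i y_i^2$ on $S$, and shows that any minimizer different from $(x_1,\dots,x_n)$ admits indices $j<i$ with $u_i<x_i\le x_j<u_j$, so that shifting a small $\delta$ from $u_j$ to $u_i$ stays in $S$ and strictly decreases the sum of squares, a contradiction; this exchange argument yields both the inequality and the equality case. You instead compute directly: with $a_i=y_i-x_i$ and tail sums $A_j$ (so $A_1=A_{n+1}=0$, $A_j\le 0$ for $j\ge 2$), Abel summation gives $\sum_i x_i(y_i-x_i)=\sum_{j=2}^n(x_j-x_{j-1})A_j\ge 0$ since both factors are $\le 0$, and combining with $\sum_i(y_i-x_i)^2\ge 0$ gives $\sum_i y_i^2\ge 2\sum_i x_iy_i-\sum_i x_i^2\ge\sum_i x_i^2$; equality forces $\sum_i(y_i-x_i)^2=0$, hence $y_i=x_i$, and the converse is trivial. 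Your argument is shorter, needs no compactness or existence of a minimizer, makes the equality case immediate, and in fact uses neither $y_i\ge 0$ nor $x_n>0$, only the monotonicity of the $x_i$; the paper's smoothing argument, on the other hand, is the standard exchange technique and would adapt unchanged to minimizing other Schur-convex functionals on $S$. Your closing observation that the product form follows from the squares form by expanding $\left(\sum_i x_i\right)^2$ and using the equal totals is exactly how the paper passes between its two lemmas, so a single proof indeed suffices.
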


\begin{proof}
For given $x_{1} \ge x_{2} \ge \dots \ge x_{n} > 0$, define the set $S$ by
\begin{eqnarray*}
S=\{ (y_1,\dots,y_n): &&y_i\ge 0, y_n\le x_n, y_n+y_{n-1}\le x_n+x_{n-1},\dots ,y_n+y_{n-1}+\dots +y_2\\
&&\le x_n+x_{n-1}+\dots +x_2, y_n+y_{n-1}+\dots +y_1=x_n+x_{n-1}+\dots +x_1 \}.
\end{eqnarray*}

Let us suppose that the minimum of $\displaystyle \sum_{i=1}^ny_i^2$ is taken at $(u_1,\dots ,u_n)$. If $(u_1,\dots ,u_n)\ne (x_1,\dots ,x_n)$, then there exists an $1<i\le n$ such that $u_i+\dots +u_n<x_i+\dots +x_n$, where $i$ is the largest integer with this property. Then $u_i<x_i$. If $u_{j} \le x_{j}$ for every $1\le j<i$, then by $u_i+\dots +u_n<x_i+\dots +x_n$, we have $u_n+u_{n-1}+\dots +u_1\neq x_n+x_{n-1}+\dots +x_1$.
It follows that there exists a maximal $1\le j<i$ such that $u_j>x_j$ and so $u_i<x_i\le x_j<u_j$. If $$0< \delta <min\{ u_j-u_i,x_i+\dots +x_n-(u_i+\dots +u_n)\} ,$$ then $(u_1,\dots ,u_{j-1},u_j-\delta ,u_{j+1},\dots ,u_{i-1},u_i+\delta ,u_{i+1},\dots ,u_n)\in S$ and 
$$u_1^{2} + \dots + u_{j-1}^{2} + (u_j-\delta)^{2} + u_{j+1}^{2} + \dots +u_{i-1}^{2} + (u_i + \delta)^{2} + u_{i+1}^{2} + \dots + u_n^{2} < \sum_{i=1}^{n}u_{i}^{2}
$$
a contradiction.
\end{proof}

As a direct consequence we have

\begin{cor}
Let $x_1\ge x_2\ge \dots \ge x_n>0$ and $y_i\ge 0$ be real numbers with $x_n\ge y_n$, $x_n+x_{n-1}\ge y_n+y_{n-1}$,\dots ,$x_n+x_{n-1}+\dots +x_1\ge y_n+y_{n-1}+\dots +y_1$. Then we have $\displaystyle \sum_{1\le i<j\le n}x_ix_j\ge \sum_{1\le i<j\le n}y_iy_j$ and equality holds if and only if $y_i=x_i$ for every $1\le i\le n$.
\end{cor}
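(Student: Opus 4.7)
My plan is to reduce the corollary to the equality case already handled by the preceding lemma by \emph{padding} the $y$-vector so that its full sum matches $\sum_i x_i$. Concretely, I would set $\Delta := \sum_{i=1}^n x_i - \sum_{i=1}^n y_i$, which is $\ge 0$ by the last of the hypothesized inequalities, and define $y_1' := y_1 + \Delta$ together with $y_i' := y_i$ for $i \ge 2$. Then $y_1' \ge 0$; the top partial sums $y_n'+\cdots+y_j'$ coincide with $y_n+\cdots+y_j$ for every $j \ge 2$ and are therefore still bounded by the corresponding $x$-sums; and $\sum_i y_i' = \sum_i x_i$ holds by construction. Since the preceding lemma imposes no ordering condition on the $y$'s, the padded tuple $(y_1',\dots,y_n')$ satisfies all of its hypotheses.

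Applying that lemma to $(y_1',\dots,y_n')$ would produce
$$\sum_{1\le i<j\le n} x_i x_j \;\ge\; \sum_{1\le i<j\le n} y_i' y_j'.$$
Then I would expand the right-hand side: since only the cross-terms involving index $1$ are affected by the padding,
$$\sum_{1\le i<j\le n} y_i' y_j' \;-\; \sum_{1\le i<j\le n} y_i y_j \;=\; \Delta \sum_{j=2}^n y_j \;\ge\; 0,$$
and chaining the two inequalities yields the claimed bound $\sum_{1\le i<j\le n} x_i x_j \ge \sum_{1\le i<j\le n} y_i y_j$.

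For the equality characterization I would trace when both links of the chain become equalities. Equality in the preceding lemma forces $y_i' = x_i$ for every $i$, hence $y_i = x_i$ for $i \ge 2$; equality in the second step forces $\Delta \sum_{j=2}^n y_j = 0$. Since $x_n > 0$ gives $\sum_{j=2}^n y_j = \sum_{j=2}^n x_j > 0$ (the case $n=1$ being trivial), this forces $\Delta = 0$ and therefore $y_1 = x_1$; the converse is immediate. I do not anticipate a substantive obstacle: the only point requiring genuine care is confirming that the padded tuple remains admissible for the preceding lemma, which works precisely because that lemma places no sorting requirement on the $y$'s themselves.
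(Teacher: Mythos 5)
Your proposal is correct: padding $y_1$ by the deficit $\Delta$ preserves all the bottom partial-sum hypotheses (which involve only $y_2,\dots,y_n$), restores the full-sum equality required by the preceding lemma, and the monotonicity step $\sum_{i<j} y_i'y_j'-\sum_{i<j} y_iy_j=\Delta\sum_{j\ge 2} y_j\ge 0$ together with the lemma's equality case gives exactly the stated conclusion (for $n\ge 2$, which is the intended setting). The paper states this corollary without proof as a direct consequence of the lemma, and your padding argument is precisely the natural completion of that step, so you are in essence following the paper's route.
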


Finally, we need the following lemma.

\begin{lem}
Let $x_1\ge x_2\ge \dots \ge x_n>0$ and  $y_1\ge y_2\ge \dots \ge y_n>0$ be two decreasing sequence of positive real numbers. Suppose that $x_1x_2\dots x_j\ge y_1y_2\dots y_j$ for every $1\le j\le n$. Then
\begin{eqnarray}\label{xi}
\sum _{i=1}^nx_i\ge \sum _{i=1}^ny_i
\end{eqnarray}
and
\begin{eqnarray}\label{xixj}
\sum_{1\le i<j\le n}x_ix_j\ge \sum_{1\le i<j\le n}y_iy_j.
\end{eqnarray}
\end{lem}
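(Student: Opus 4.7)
I plan to deduce both inequalities from a single mechanism: the tangent inequality $e^{a}-e^{b}\ge e^{b}(a-b)$ for the convex exponential, followed by an Abel summation that exploits the monotonicity of $(y_i)$. Throughout, set $\ell_i=\log(x_i/y_i)$ and $L_j=\sum_{i=1}^{j}\ell_i$; the hypothesis $\prod_{i=1}^{j}x_i\ge \prod_{i=1}^{j}y_i$ is exactly $L_j\ge 0$ for every $j\le n$.

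For \eqref{xi}, applying the tangent inequality with $a=\log x_i$, $b=\log y_i$ gives $x_i-y_i\ge y_i\ell_i$. Summing and applying Abel's transformation,
\[
\sum_{i=1}^{n}y_i\ell_i = y_n L_n + \sum_{i=1}^{n-1}(y_i-y_{i+1})L_i,
\]
which is non-negative because $(y_i)$ is decreasing, $y_n>0$, and every $L_j\ge 0$. This yields $\sum_{i=1}^{n}x_i\ge\sum_{i=1}^{n}y_i$.

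For \eqref{xixj} I apply the tangent inequality to every product: $x_i x_j-y_i y_j\ge y_i y_j(\ell_i+\ell_j)$. Summing over $i<j$ and collecting the coefficient of each $\ell_k$ yields
\[
\sum_{i<j}y_i y_j(\ell_i+\ell_j)=\sum_{k=1}^{n}w_k\ell_k,\qquad w_k:=y_k\sum_{i\ne k}y_i=y_k(Y_n-y_k),
\]
where $Y_n=\sum_{i=1}^{n}y_i$. A direct calculation gives
\[
w_k-w_{k+1}=(y_k-y_{k+1})\bigl(Y_n-y_k-y_{k+1}\bigr)\ge 0,
\]
so the weights $(w_k)$ are themselves a decreasing sequence. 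Abel's transformation then gives
\[
\sum_{k=1}^{n}w_k\ell_k = w_n L_n + \sum_{k=1}^{n-1}(w_k-w_{k+1})L_k \ge 0,
\]
proving \eqref{xixj}.

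The step I expect to be the main obstacle is spotting the correct weighted identity $\sum_{i<j}y_i y_j(\ell_i+\ell_j)=\sum_k w_k\ell_k$ and verifying that the weights $w_k=y_k(Y_n-y_k)$ inherit the monotonicity of $(y_k)$ via the factorisation $(y_k-y_{k+1})(Y_n-y_k-y_{k+1})$. Once these are in place, the two proofs share the same Abel-summation machinery, differing only in the weight sequence that is paired with the log-deficits $(\ell_k)$; in particular the decreasing hypothesis on $(x_k)$ is never actually used, only the decreasing hypothesis on $(y_k)$ together with the weak log-majorisation encoded by $L_j\ge 0$.
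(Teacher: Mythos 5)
Your proof is correct, and it takes a genuinely different route from the paper. The paper follows Soundararajan's method: it first appends an extra pair $x_{n+1}=\min(x_n,y_n)\frac{y_1\cdots y_n}{x_1\cdots x_n}$, $y_{n+1}=\min(x_n,y_n)$ so that the full products become equal, rescales so that all variables are at least $1$ (making the exponents $\log x_i,\log y_i$ nonnegative and ordered), and then invokes Muirhead's theorem with the choices $a_1=e$, $a_2=\dots=a_{n+1}=1$ for \eqref{xi} and $a_1=a_2=e$, $a_3=\dots=a_{n+1}=1$ for \eqref{xixj}. You instead prove both inequalities by an elementary two-step argument: the tangent-line inequality $x-y\ge y\log(x/y)$ applied termwise (to the $x_i,y_i$ for \eqref{xi}, to the products $x_ix_j,y_iy_j$ for \eqref{xixj}), followed by Abel summation against the partial log-sums $L_j\ge 0$; the only structural facts needed are that the weight sequences $y_k$ and $w_k=y_k(Y_n-y_k)$ are nonnegative and non-increasing, the latter via the factorisation $w_k-w_{k+1}=(y_k-y_{k+1})(Y_n-y_k-y_{k+1})\ge 0$. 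Your computations check out (the Abel identities and the sign of each coefficient are right), so the argument is complete. What each approach buys: the paper's reduction to Muirhead is short once that theorem is quoted and comes bundled with an equality analysis, which the authors use elsewhere; your proof is self-contained, avoids both the augmentation-and-scaling step and Muirhead, and, as you observe, never uses the monotonicity of $(x_i)$, so it establishes a slightly more general statement (only the weak log-majorisation $L_j\ge 0$ and the monotonicity of $(y_i)$ are needed).
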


\begin{proof} We follow Soundararajan's \cite{S} method. Set $x_{n+1}=\min (x_n,y_n)\frac{y_1y_2\dots y_n}{x_1x_2\dots x_n}$ and $y_{n+1}=\min (x_n,y_n)$. Then $x_{n+1}\le x_n$, $y_{n+1}\le y_n$ and $x_1x_2\dots x_{n+1}=y_1y_2\dots y_{n+1}$. By scaling we may also assume that $x_{n+1}\ge 1$ so that all the variables are at least 1. We now deduce our Lemma from Muirhead's theorem (see \cite{HLP} Theorem  45, pages 44--48). 

Let $\alpha_{1}, \dots ,\alpha_{n}$ be nonnegative real numbers and let $a_{1}, \dots ,a_{n} > 0$. Consider the sum
\[
\sum_{\sigma}a_{1}^{\alpha_{1}}a_{2}^{\alpha_{2}}\cdots a_{n}^{\alpha_{n}},
\]
where $\sigma$ runs over the permutations of $\{1, \dots n\}$. We say the sequence $(\alpha_{i})$ majorize the sequence $(\alpha_{i}^{'})$ if
\begin{itemize}
    \item [(i)] $\alpha_{1}+ \dots +\alpha_{n} = \alpha_{1}^{'} + \dots + \alpha_{n}^{'}$,
    \item [(ii)] $\alpha_{1} \ge \dots \ge \alpha_{n}$, $\alpha_{1}^{'} \ge \dots \ge \alpha_{n}^{'}$,
    \item [(iii)] $\alpha_{1} + \dots + \alpha_{\nu} \ge \alpha_{1}^{'} + \dots + \alpha_{\nu}^{'}$, for every $1 \le \nu \le n$.
\end{itemize}
The theorem of Muirhead asserts that if the sequence $(\alpha_{i})$ majorize the sequence $(\alpha_{i}^{'})$, then for all nonnegative $a_{i}$'s,
\[
\sum_{\sigma}a_{1}^{\alpha_{1}}a_{2}^{\alpha_{2}}\cdots a_{n}^{\alpha_{n}} \ge \sum_{\sigma}a_{1}^{\alpha_{1}^{'}}a_{2}^{\alpha_{2}^{'}}\cdots a_{n}^{\alpha_{n}^{'}}
\]
and there is equality only when $(\alpha_{i})$ and $(\alpha_{i}^{'})$ are identical
or when all the $a_{i}$'s are equal.

Now choose $\alpha _i=\log x_i$ and $\alpha _i'=\log y_i$ for $1\le i\le n+1$. To prove (\ref{xi}) take $a_1 = e$ and $a_2 =\dots = a_{n+1} = 1$. In order to verify (\ref{xixj}) take $a_1=a_2=e$ and $a_3 =\dots = a_{n+1} = 1$. The
hypotheses of Lemma 20 then give the hypotheses of Muirhead's theorem ((i)-(iii))
and the conclusion of Muirhead's theorem gives our desired inequalities.
\end{proof}

Now we prove Theorem \ref{upp} by induction on $k$.\\ For $k=2$ we have $\frac{1}{x_1x_2}\le \frac{1}{2}=\frac{1}{v_1v_2}$ and
$\frac{1}{x_1}+\frac{1}{x_2}\le \frac{3}{2}=\frac{1}{v_1}+\frac{1}{v_2}.$\\
By the definition of $v_n$ and Theorem \ref{18} (3) we have
$$
v_{k+1}=1+\frac{\sum\limits_{1\le i\le k} \frac{1}{v_i} } { 1 - \sum\limits_{1 \le i < j \le k} \frac{1}{v_i v_j} }=1+v_1v_2\dots v_k\sum\limits_{1\le i\le k}\frac{1}{v_i}.
$$
Hence  $$\sum_{1\le i<j\le k } \frac{1}{v_iv_j}=1-\frac{1}{v_1v_2\dots v_k}.$$

Let us suppose that $b_1b_2\dots b_k\ge  v_1v_2\dots v_k$. Let $1\le l\le k$ denote the largest integer $j$ such that $$b_jb_{j+1}\dots b_k\ge v_jv_{j+1}\dots v_k.$$  Then
\begin{eqnarray}\label{prod}
b_l\ge v_l,\quad b_lb_{l+1}\ge v_lv_{l+1},\quad b_lb_{l+1}b_{l+2}\ge v_lv_{l+1}v_{l+2},\dots
\end{eqnarray}
We shall prove that
\begin{eqnarray}\label{1bi}
\sum_{i=l}^k\frac{1}{b_i}\le \sum_{i=l}^k\frac{1}{v_i}
\end{eqnarray}
and
\begin{eqnarray}\label{1bibj}
\sum_{l\le i<j\le k}\frac{1}{b_ib_j}\le \sum_{l\le i<j\le k}\frac{1}{v_iv_j}.
\end{eqnarray}

Taking $x_1=\frac{1}{v_l}$, $x_2=\frac{1}{v_{l+1}}$, \dots ,$x_{k-l+1}=\frac{1}{v_k}$ and $y_1=\frac{1}{b_l}$, $y_2=\frac{1}{b_{l+1}}$,\dots ,$y_{k-l+1}=\frac{1}{b_k}$ in Lemma 20 we see that (\ref{prod}) implies (\ref{1bi}) and (\ref{1bibj}).

By induction, we get that
$$
\sum_{i=1}^k\frac{1}{b_i}=\sum_{i=1}^{l-1}\frac{1}{b_i}+\sum_{i=l}^k\frac{1}{b_i}\le \sum_{i=1}^{l-1}\frac{1}{v_i}+\sum_{i=l}^k\frac{1}{v_i}=\sum_{i=1}^k\frac{1}{v_i}
$$
and
\begin{eqnarray}
&&\sum_{1\le i<j\le k}\frac{1}{b_ib_j}= \sum_{1\le i<j\le l-1 } \frac{1}{b_ib_j} + \left( \sum_{i=1}^{l-1}\frac{1}{b_i}  \right)\left( \sum_{i=l}^k\frac{1}{b_i} \right) +  \sum_{l\le i<j\le k } \frac{1}{b_ib_j}\\ &&\le  \sum_{1\le i<j\le l-1 } \frac{1}{v_iv_j} + \left( \sum_{i=1}^{l-1}\frac{1}{v_i}  \right)\left( \sum_{i=l}^k\frac{1}{v_i} \right) +  \sum_{l\le i<j\le k } \frac{1}{v_iv_j} = \sum_{1\le i<j\le k } \frac{1}{v_iv_j}.
\end{eqnarray}


If  $b_1b_2\cdots b_k\le v_1v_2\cdots v_k$, then $$\sum_{1\le i<j\le k } \frac{1}{b_ib_j}\le  1-\frac{1}{b_1b_2\cdots b_k}\le  1-\frac{1}{v_1v_2\cdots v_k}= \sum_{1\le i<j\le k } \frac{1}{v_iv_j}.$$
We have to prove $\displaystyle \sum_{i=1}^k \frac{1}{b_i}\le  \sum_{i=1}^k \frac{1}{v_i}$. If there exists an $1\le l\le k$ such that $\displaystyle \sum_{i=l}^k\frac{1}{b_i}\le \sum_{i=l}^k\frac{1}{v_i}$, then by induction we have $$\sum_{i=1}^{l-1}\frac{1}{b_i} \le  \sum_{i=1}^{l-1}\frac{1}{v_i}$$ and  $$\sum_{i=l}^k \frac{1}{b_i} \le  \sum_{i=l}^k \frac{1}{v_i}.$$ It follows that $\displaystyle \sum_{i=1}^k \frac{1}{b_i}\le  \sum_{i=1}^k \frac{1}{v_i}$ and we are done.
So we may assume that $\displaystyle \sum_{i=l}^k\frac{1}{b_i}> \sum_{i=l}^k\frac{1}{v_i}$ for every $1\le l\le k$.

Upon writing $x_i=\frac{1}{b_i}$ and $y_i=\frac{1}{v_i}$ in Corollary 19, we get that $\displaystyle \sum_{1\le i<j\le k}\frac{1}{b_ib_j}>\sum_{1\le i<j\le k}\frac{1}{v_iv_j}$. We have already proved that if $\displaystyle \sum_{1\le i<j\le k}\frac{1}{b_ib_j}<1$, then $\displaystyle \sum_{1\le i<j\le k}\frac{1}{b_ib_j}\le \sum_{1\le i<j\le k}\frac{1}{v_iv_j}$. It follows that $\displaystyle \sum_{1\le i<j\le k}\frac{1}{b_ib_j}\ge 1$, a contradiction.

\end{proof}

\begin{proof}[Proof of Theorem \ref{xnvn-1}]
Let $0<b_1\le b_2\le \dots \le b_n$ such that $\sigma_{n-2}(b_1,b_2,\dots ,b_n)=\sigma_n(b_1,b_2,\dots ,b_n)$. Then $\displaystyle \sum_{1\le i<j\le n-1}\frac{1}{b_ib_j}<1$. If $\displaystyle \sum_{1\le i<j\le n-1}\frac{1}{b_ib_j}=1-\frac{A}{b_1b_2\dots b_{n-1}}$, then by Theorem\ref{18} (3) and Theorem \ref{upp}, $$b_n=\frac{b_1b_2\dots b_{n-1}}{A}\sum_{i=1}^{n-1}\frac{1}{b_i}\le v_1v_2\dots v_{n-1}\sum_{i=1}^{n-1}\frac{1}{v_i}=v_n-1.$$

\end{proof}

\begin{proof}[Proof of Therorem \ref{xn-k}]
By Theorem \ref{upp}, we have
\begin{eqnarray*}
&&\frac{1}{v_{n-k}}\le \frac{1}{v_{n-k}}\sum_{i=1}^{n-k-1}\frac{1}{v_i}+\frac{1}{v_{n-k+1}}\sum_{i=1}^{n-k}\frac{1}{v_i}+\dots +\frac{1}{v_{n-1}}\sum_{i=1}^{n-2}\frac{1}{v_i}+\frac{1}{v_n-1}\sum_{i=1}^{n-1}\frac{1}{v_i} \\
&=& 1-\sum_{1\le i < j < n-k}\frac{1}{v_{i}v_{j}} \le 1-\sum_{1\le i < j < n-k}\frac{1}{x_{i}x_{j}}\\
&= & \frac{1}{x_{n-k}}\sum_{i=1}^{n-k-1}\frac{1}{x_i}+\frac{1}{x_{n-k+1}}\sum_{i=1}^{n-k}\frac{1}{x_i}+\dots +\frac{1}{x_{n-1}}\sum_{i=1}^{n-2}\frac{1}{x_i}+\frac{1}{x_n}\sum_{i=1}^{n-1}\frac{1}{x_i}\\
&\le & \frac{1}{x_{n-k}}\sum_{i=1}^n\frac{1}{x_i}+\frac{1}{x_{n-k}}\sum_{i=1}^n\frac{1}{x_i}+\dots +\frac{1}{x_{n-k}}\sum_{i=1}^n\frac{1}{x_i}+\frac{1}{x_{n-k}}\sum_{i=1}^n\frac{1}{x_i}\\
&\le & \frac{1}{x_{n-k}}\sum_{i=1}^{\infty}\frac{1}{v_i}+\frac{1}{x_{n-k}}\sum_{i=1}^{\infty}\frac{1}{v_i}+\dots +\frac{1}{x_{n-k}}\sum_{i=1}^{\infty}\frac{1}{v_i}+\frac{1}{x_{n-k}}\sum_{i=1}^{\infty}\frac{1}{v_i}\\
&\le &\frac{2(k+1)}{x_{n-k}}.
\end{eqnarray*}
Hence
$$x_{n-k}\le 2(k+1)v_{n-k},$$ which completes the proof.
\end{proof}





\begin{proof}[Proof of Lemma \ref{Nn-2}]


For $k = n-2$, the equation (\ref{eqk}) is equivalent to 
\begin{eqnarray*}\label{startlemma}&&\sigma_{n-2}(x_1,x_2,\dots ,x_{n-2})+\sigma_{n-3}(x_1,x_2,\dots ,x_{n-2})x_{n-1}+\sigma_{n-3}(x_1,x_2,\dots ,x_{n-2})x_n+\\&&\sigma_{n-4}(x_1,x_2,\dots ,x_{n-2})x_{n-1}x_n=\sigma_{n-2}(x_1,x_2,\dots ,x_{n-2})x_{n-1}x_n.\end{eqnarray*}
We fix, for the moment, $x_1,\ldots,x_{n-2}$ and introduce the abbreviations
$$
A=\sigma_{n-2}(x_1,\ldots,x_{n-2}),\qquad
C=\sigma_{n-3}(x_1,\ldots,x_{n-2}),\qquad
B=\sigma_{n-4}(x_1,\ldots,x_{n-2}).
$$
Collecting terms in the variables $x_{n-1},x_n$, the above equality becomes
\begin{equation}\label{ABC}
A+C x_{n-1}+C x_n+B x_{n-1}x_n=A x_{n-1}x_n.
\end{equation}
Hence
$$
(A-B)x_{n-1}x_n=A+C(x_{n-1}+x_n).
$$
The right hand side is positive, thus $A-B>0$.
Next, equality \eqref{ABC} can be written as
$$
A(A-B)+C^2=\big((A-B)x_{n-1}-C\big)\big((A-B)x_n-C\big).
$$
This is the key point: for fixed $(x_1,\ldots,x_{n-2})$, the number of pairs $(x_{n-1},x_n)$ satisfying the equation can be bounded using the divisor function $d(m)$, where
$
m=A(A-B)+C^2,
$
since each pair corresponds to a~factorization $m=uv$ with
$
u=(A-B)x_{n-1}-C,\,\, v=(A-B)x_n-C.
$
It follows that
\begin{align*}
f_{n-2}(n)
&\le 2\sum_{x_1x_2\cdots x_{n-2}\le N_{n-2,n}^{(2)}}
d\Big(
\sigma_{n-2}(x_1,x_2,\ldots,x_{n-2})\big(\sigma_{n-2}(x_1,x_2,\ldots,x_{n-2})-\sigma_{n-4}(x_1,x_2,\ldots,x_{n-2})\big) \\
&\hspace{35mm}+\sigma_{n-3}(x_1,x_2,\ldots,x_{n-2})^2
\Big).
\end{align*}
If $
\sigma_{n-2}(x_1,x_2,\dots ,x_n)=\sigma_n(x_1,\dots ,x_n),$
then $\sum_{1\le i<j\le n}\frac{1}{x_ix_j}=1$. Hence $x_{n-2}\ge \sqrt{\binom{n-2}{2}}$, otherwise $\sum_{1\le i<j\le n-2}\frac{1}{x_ix_j}\ge \binom{n-2}{2}\frac{1}{x_{n-2}^2}>1$, a contradiction. It follows that
$$
\sqrt{\binom{n-2}{2}}\le x_1\cdot\ldots\cdot x_{n-2}=\sigma_{n-2}(x_1,\dots ,x_{n-2}).
$$
Note that
$$\sigma_{n-3}(x_1,x_2,\dots ,x_{n-2})\le (n-2)\sigma_{n-2}(x_1,x_2,\dots ,x_{n-2}).$$
If $n\ge 5$, then $ n-2\le \binom{n-2}{2} $, which implies that 
$$
\sigma_{n-3}(x_1,\dots ,x_{n-2})\le \sigma_{n-2}(x_1,\dots ,x_{n-2})^3.
$$
It is well known that $d(m)=m^{o(1)}$ as $m\to \infty$. It follows that  
\begin{eqnarray*}
&&d\Big(\sigma_{n-2}(x_1,x_2,\dots ,x_{n-2})(\sigma_{n-2}(x_1,x_2,\dots ,x_{n-2})-\sigma_{n-4}(x_1,x_2,\dots ,x_{n-2}))+\sigma_{n-3}(x_1,x_2,\dots ,x_{n-2})^2\Big)\\
=&& \Big(\sigma_{n-2}(x_1,x_2,\dots ,x_{n-2})(\sigma_{n-2}(x_1,x_2,\dots ,x_{n-2})-\sigma_{n-4}(x_1,x_2,\dots ,x_{n-2}))+\sigma_{n-3}(x_1,x_2,\dots ,x_{n-2})^2\Big)^{o(1)}\\
\le && (\sigma_{n-2}(x_1,x_2,\dots ,x_{n-2})^2+\sigma_{n-2}(x_1,x_2,\dots ,x_{n-2})^6)^{o(1)}
\le  \sigma_{n-2}(x_1,x_2,\dots ,x_{n-2})^{o(1)}\le ({N_{n-2,n}^{(2)}})^{o(1)}.
\end{eqnarray*}
By Oppenheim's Theorem, see \cite{Op},
$$
f_{n-2}(n)\le \sum_{x_1x_2\dots x_{n-2}\le N_{n-2,n}^{(2)}}(N_{n-2,n}^{(2)})^{o(1)}=(N_{n-2,n}^{(2)})^{1+o(1)}, 
$$
which completes the proof.
\end{proof}

The proof Theorem \ref{lower} is based on the following Lemma

\begin{lem}\label{lemmalower} Let $x_1\le x_2\le \dots \le x_{n-1}$ be positive integers with $$\sum_{1\le i<j\le n-1}\frac{1}{x_ix_j}=1-\frac{1}{\lcm_{1\le i<j\le n-1} (x_ix_j)}.$$ For $1\le i\le n-1$, let $x_n=\lcm_{1\le i<j\le n-1}(x_ix_j)\sum_{i=1}^{n-1}\frac{1}{x_i}$ and $x_i^*=x_i$.
\begin{enumerate}
\item Then we have $\sum_{1\le i<j\le n}\frac{1}{x_ix_j}=1$.
\item If $x_n^*=x_n+1$, then $$\sum_{1\le i<j\le n}\frac{1}{x_i^*x_j^*}=1-\frac{1}{\lcm_{1\le i<j\le n} (x_i^*x_j^*)}.$$
\item 
If the positive integers $x_i$, $1\le i\le n-1$, are even, define $x_i^{**}=x_i$ for $1\le i\le n-1$ and $x_n^{**}=x_n+2$. Then
$$
\sum_{1\le i<j\le n}\frac{1}{x_i^{**}x_j^{**}}=1-\frac{1}{\mathrm{lcm}_{1\le i<j\le n}(x_i^{**}x_j^{**})}.
$$
\item For $n\ge 3$, there exist n-tuples $(x_1^{(h,n)},x_2^{(h,n)},\dots ,x_n^{(h,n)})$, $1\le h\le n-2$ with $$x_1^{(h,n)}=x_2^{(h,n)}=x_3^{(h,n)}=2$$ and $$\sum_{1\le i<j\le n}\frac{1}{x_i^{(h,n)}x_j^{(h,n)}}=1-\frac{1}{\lcm_{1\le i<j\le n} (x_i^{(h,n)}x_j^{(h,n)})}.$$
\end{enumerate}
\end{lem}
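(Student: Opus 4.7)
My plan is to prove (1)--(3) by direct algebraic computation built on one structural observation, and then obtain (4) by induction using (2) and (3).

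Throughout, write $L := \lcm_{1\le i<j\le n-1}(x_ix_j)$, $L^* := \lcm(x_1,\ldots,x_{n-1})$, and $D := L/L^*$, all positive integers. The key observation is that in the definition $x_n = L \sum_{i=1}^{n-1} 1/x_i$, pulling out $L^*$ gives $x_n = D\cdot S$ with $S := \sum_{i=1}^{n-1} L^*/x_i \in \mathbb{Z}$. In particular $D \mid x_n$, which yields the arithmetic identity
$$\gcd(D,\,x_n+k) \;=\; \gcd(D,\,DS+k) \;=\; \gcd(D,k) \qquad\text{for every integer } k.$$

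For (1)--(3), denote the last coordinate generically by $y_n$ (so $y_n = x_n,\,x_n+1,\,x_n+2$ in (1), (2), (3) respectively). Splitting the sum and using the hypothesis together with the identity $\sum_{i=1}^{n-1} 1/x_i = x_n/L$,
$$\sum_{1\le i<j\le n-1}\frac{1}{x_ix_j} \;+\; \frac{1}{y_n}\sum_{i=1}^{n-1}\frac{1}{x_i} \;=\; \left(1-\frac{1}{L}\right) + \frac{x_n}{L\,y_n}.$$
For $y_n = x_n$ this collapses to $1$, giving (1). For $y_n = x_n+1$ it equals $1 - \tfrac{1}{L(x_n+1)}$, and for $y_n = x_n+2$ it equals $1 - \tfrac{2}{L(x_n+2)}$.

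To finish (2) and (3), I must identify the denominator with the full lcm over $1 \le i < j \le n$. That lcm equals $\lcm(L,\,y_n L^*) = L^* \lcm(D,\,y_n)$. For $y_n = x_n + 1$ the identity above gives $\gcd(D,x_n+1) = 1$, so the lcm equals $L(x_n+1)$, matching the numerator $1$ and proving (2). For $y_n = x_n + 2$ the identity gives $\gcd(D,x_n+2) = \gcd(D,2)$; here I claim $2 \mid D$ under the hypothesis that all $x_i$ are even. Indeed the two largest $2$-adic valuations of $x_1,\ldots,x_{n-1}$ are both $\ge 1$, so $v_2(L)$, which is the sum of those two top valuations, satisfies $v_2(L) \ge v_2(L^*) + 1$. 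Hence $\gcd(D,x_n+2) = 2$, the lcm equals $L(x_n+2)/2$, matching the numerator $2$ and proving (3).

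Part (4) is then a clean induction on $n$, carrying the auxiliary claim that the tuple $T^{(n-2,n)}$ constructed by iteratively applying (3) has all entries even. The base case is $T^{(1,3)} = (2,2,2)$. For the induction step, from the tuples $T^{(h,n-1)}$ ($1 \le h \le n-3$), I construct $T^{(h,n)}$ for $1 \le h \le n-3$ by applying (2) to $T^{(h,n-1)}$, and $T^{(n-2,n)}$ by applying (3) to the all-even tuple $T^{(n-3,n-1)}$. The new entry $x_n^{**} = DS + 2$ is even because $2 \mid D$, so the auxiliary assertion propagates; the prescription $x_1 = x_2 = x_3 = 2$ is preserved because the two constructions only append trailing entries, and monotonicity $y_n \ge x_{n-1}$ follows from $x_n = L\sum 1/x_i \ge L/x_1 \ge x_{n-1}$ (using $x_1 x_{n-1} \mid L$). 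Distinctness of the $n-2$ tuples is immediate: for distinct $h,h' \le n-3$ the first $n-1$ coordinates differ by the inductive distinctness, while $T^{(n-3,n)}$ and $T^{(n-2,n)}$ share their first $n-1$ coordinates but differ in the last by $1$.

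The main obstacle I expect is the $\gcd$/$\lcm$ bookkeeping in (2)--(3); once the identity $\gcd(D,x_n+k) = \gcd(D,k)$ is in hand (via $x_n = DS$), and the parity remark forces $2 \mid D$ in (3), the rest is routine.
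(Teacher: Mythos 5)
Your proof is correct, and it reaches the key identities of parts (2) and (3) by a mechanism different from the paper's. The computation of the sums in (1)--(3) and the inductive scheme in (4) coincide with the paper's, but for the crucial identification of $\lcm_{1\le i<j\le n}$ of the augmented tuple the paper argues by two-sided divisibility: writing the sum over the common denominator (the new lcm) shows that the reduced denominator $(x_n+1)L$ (resp.\ $(\tfrac{x_n}{2}+1)L$) divides it, while checking that every pairwise product divides $(x_n+1)L$ (resp.\ $(\tfrac{x_n}{2}+1)L$) gives the reverse divisibility. You instead factor the new lcm directly as $\lcm\bigl(L,\,y_n\lcm(x_1,\dots,x_{n-1})\bigr)=\lcm(x_1,\dots,x_{n-1})\cdot\lcm(D,y_n)$ with $D=L/\lcm(x_1,\dots,x_{n-1})$, and evaluate $\gcd(D,y_n)$ from $D\mid x_n$ (so $\gcd(D,x_n+k)=\gcd(D,k)$) together with the $2$-adic observation $v_2(D)\ge 1$ when all $x_i$ are even; all of these steps check out. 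Your route is more computational and has the small advantages that part (3) never needs the parity of $x_n$ itself (the paper asserts ``$x_n$ is even'' without proof, though it does hold since each $L/x_i$ is even), and that in (4) you make explicit some details the paper leaves implicit: the evenness of the appended entry $DS+2$, the monotonicity $y_n\ge x_{n-1}$ needed to keep the tuples ordered before reapplying (2)--(3), and the distinctness of the $n-2$ tuples. The paper's reduced-fraction argument, on the other hand, is shorter and avoids introducing $D$ altogether.
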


\begin{proof} Let
$
L=\mathrm{lcm}_{1\le i<j\le n-1}(x_i x_j),\, L^*=\mathrm{lcm}_{1\le i<j\le n}(x_i^* x_j^*).
$
We have
$
\sum_{1\le i<j\le n-1}\frac{1}{x_i x_j}=1-\frac{1}{L}.
$ 
Note that $x_i\mid L$, hence
$
x_n=L\sum_{i=1}^{n-1}\frac{1}{x_i}=\sum_{i=1}^{n-1}\frac{L}{x_i}\in\mathbb Z.
$\\
(1): Clearly, $\frac{1}{x_n}\sum_{i=1}^{n-1}\frac{1}{x_i}=\frac{1}{L},$ hence $$\sum_{1\le i<j\le n}\frac{1}{x_i x_j}=\sum_{1\le i<j\le n-1}\frac{1}{x_i x_j}+\sum_{i=1}^{n-1}\frac{1}{x_i x_n}=\left(1-\frac{1}{L}\right)+\frac{1}{x_n}\sum_{i=1}^{n-1}\frac{1}{x_i}=\left(1-\frac{1}{L}\right)+\frac{1}{L}=1.$$

(2): We have \begin{eqnarray*} 
    &&\sum_{1\le i<j\le n}\frac{1}{x_i^*x_j^*}=\sum_{1\le i<j\le n-1}\frac{1}{x_ix_j}+\frac{1}{x_n+1}\sum_{i=1}^{n-1}\frac{1}{x_i}\\
    &&=\sum_{1\le i<j\le n-1}\frac{1}{x_ix_j}+\left(\frac{1}{x_n}-\frac{1}{x_n(x_n+1)}\right)\sum_{i=1}^{n-1}\frac{1}{x_i}=\sum_{1\le i<j\le n}\frac{1}{x_i x_j}-\frac{1}{(x_n+1)L}=1-\frac{1}{(x_n+1)L}.
\end{eqnarray*}
Thus it suffices to show that $L^*=(x_n+1)L.$
\\
Since there exists $A<L^*$ we have
$$\frac{A}{L^*}=\sum_{1\le i<j\le n}\frac{1}{x_i^*x_j^*}=\frac{(x_n+1)L-1}{(x_n+1)L},$$
where $\gcd((x_n+1)L-1,(x_n+1)L)=1,$ we have $(x_n+1)L\mid L^*.$\\ Since for every $1\le i<j\le n$ we have $x_i^*x_j^*\mid (x_n+1)L,$ thus
$
L^* \mid (x_n+1)L,
$
which completes the proof.\\  
(3): Let
$
L^{**}=\mathrm{lcm}_{1\le i<j\le n}(x_i^{**} x_j^{**}).
$ Thus, by proceeding analogously to the above,

\begin{eqnarray*}
    &&\sum_{1\le i<j\le n}\frac{1}{x_i^{**}x_j^{**}}=\sum_{1\le i<j\le n-1}\frac{1}{x_ix_j}+\frac{1}{x_n+2}\sum_{i=1}^{n-1}\frac{1}{x_i}\\
    &&=\sum_{1\le i<j\le n-1}\frac{1}{x_ix_j}+\left(\frac{1}{x_n}-\frac{2}{x_n(x_n+2)}\right)\sum_{i=1}^{n-1}\frac{1}{x_i}=1-\frac{2}{(x_n+2)L}.
\end{eqnarray*}
Now $x_n$ is even. It suffices to prove that $$L^{**}=\left(\frac{x_n}{2}+1\right)L.$$
Since there exists $A<L^{**}$ we have
$$\frac{A}{L^{**}}=\sum_{1\le i<j\le n}\frac{1}{x_i^{**}x_j^{**}}=\frac{\left(\frac{x_n}{2}+1\right)L-1}{\left(\frac{x_n}{2}+1\right)L},$$
where $\gcd\left(\left(\frac{x_n}{2}+1\right)L-1,\left(\frac{x_n}{2}+1\right)L\right)=1,$ we have $\left(\frac{x_n}{2}+1\right)L\mid L^{**}.$\\ On the other hand, for every $1\le i<j\le n-1$ we have $x_i^*x_j^*\mid \left(\tfrac{x_n}{2}+1\right)L$ and for every $1\le h\le n-1$ we have 
$$
x_h(x_n+2)\mid \left(\tfrac{x_n}{2}+1\right)L \Longleftrightarrow 2x_h\mid L,
$$
which is fulfilled since $x_1$ and $x_2$ are even.
It follows that for every $1\le i<j\le n$ we have
$
x_i^*x_j^*\mid \left(\tfrac{x_n}{2}+1\right)L.
$ Hence
$
L^{**}\mid \left(\tfrac{x_n}{2}+1\right)L,
$
which completes the proof.\\  
(4): We prove by induction on $n$ that there exist $n$-tuples $\left(x_1^{(h,n)},x_2^{(h,n)},\dots ,x_n^{(h,n)}\right)$, $1\le h\le n-2$ with $$x_1^{(h,n)}=x_2^{(h,n)}=x_3^{(h,n)}=2$$ and $$\sum_{1\le i<j\le n}\frac{1}{x_i^{(h,n)}x_j^{(h,n)}}=1-\frac{1}{\lcm_{1\le i<j\le n} \left(x_i^{(h,n)}x_j^{(h,n)}\right)},$$ where $x_{i}^{(n-2,n)}$, $1\le i\le n$ are even. 

If $n=3$, then $\left(x_1^{(1,3)},x_2^{(1,3)},x_3^{(1,3)}\right)=(2,2,2)$ is suitable. 

Suppose that for some $n\ge 3$ there exist $n$-tuples $\left(x_1^{(h,n)},x_2^{(h,n)},\dots ,x_n^{(h,n)}\right)$, $1\le h\le n-2$ with $$\sum_{1\le i<j\le n}\frac{1}{x_i^{(h,n)}x_j^{(h,n)}}=1-\frac{1}{\lcm_{1\le i<j\le n} \left(x_i^{(h,n)}x_j^{(h,n)}\right)}.$$ We may assume that $x_i^{(n-2,n)}$, $1\le i\le n$ are even. Let

\begin{eqnarray*}&&\left(x_1^{(h,n+1)},\dots ,x_n^{(h,n+1)},x_{n+1}^{(h,n+1)}\right)\\ =&&\left(x_1^{(h,n)},\dots ,x_n^{(h,n)},\lcm_{1\le i<j\le n}\left(x_i^{(h,n)}x_j^{(h,n)}\right)\sum_{i=1}^n\frac{1}{x_i}+1\right)\end{eqnarray*}
for $1\le h\le n-2$ and
\begin{eqnarray*}&&\left(x_1^{(n-1,n+1)},\dots ,x_n^{(n-1,n+1)},x_{n+1}^{(n-1,n+1)}\right)\\ =&&\left(x_1^{(n-2,n)},\dots ,x_n^{(n-2,n)},\lcm_{1\le i<j\le n}\left(x_ix_j\right)\sum_{i=1}^n\frac{1}{x_i}+2\right)\end{eqnarray*}

By Lemma \ref{lemmalower}  we have $$\sum_{1\le i<j\le n+1}\frac{1}{x_i^{(h,n+1)}x_j^{(h,n+1)}}=1-\frac{1}{\lcm_{1\le i<j\le n+1} \left(x_i^{(h,n+1)}x_j^{(h,n+1)}\right)}$$ for $1\le h\le n-1$ and $x_i^{(n-1,n+1)}$, $1\le i\le n+1$ are even, which completes the proof.
\end{proof}

\begin{proof}[Proof of Theorem \ref{lower}]
We get from Lemma \ref{lemmalower} (4) that there exist different $$\left(x_1^{(h,n-1)},x_2^{(h,n-1)},\dots ,x_{n-1}^{(h,n-1)}\right)$$ for $1\le h\le n-3$ with $$\sum_{1\le i<j\le n-1}\frac{1}{x_i^{(h,n-1)}x_j^{(h,n-1)}}=1-\frac{1}{\lcm_{1\le i<j\le n-1} \left(x_i^{(h,n-1)}x_j^{(h,n-1)}\right)}.$$ For $1\le h\le n-3$, let $$x_n^{(h,n-1)}=\lcm_{1\le i<j\le n-1}\left(x_i^{(h,n-1)}x_j^{(h,n-1)}\right)\sum_{i=1}^{n-1}\frac{1}{x_i^{(h,n-1)}}.$$ By Lemma \ref{lemmalower} (1) we have $$\sum_{1\le i<j\le n}\frac{1}{x_i^{(h,n-1)}x_j^{(h,n-1)}}=1,$$ which completes the proof.
\end{proof}

\section{Acknowledgements}
S\'andor Z. Kiss and Csaba S\'andor were supported by the NKFIH Grants No. K, K146387, KKP 144059.
S\'andor Z. Kiss was also supported by the National Research, Development and Innovation Office NKFIH (Excellence program, Grant Nr. 153829)

\thebibliography{999}

\bibitem{ACMJ}
M. K. Azarian:
\emph{Sylvester's Sequence and the Infinite Egyptian Fraction Decomposition of 1}.
College Math. J. {\bf 43}(4), 340 to 342 (2012).


\bibitem{ACM}
A. C. Cojocaru, M. R. Murty:
\emph{An Introduction to Sieve Methods and their Applications}.
Cambridge University Press, Cambridge (2005).



\bibitem{BE}
T. D. Browning, C. Elsholtz:
\emph{The number of representations of rationals as a sum of unit fractions}.
Ill. J. Math. {\bf 55}(2), 685 to 696 (2011).


\bibitem{BFT}
J. Bukor, J. Filakovszky, P. T\'oth, J. T. T\'oth:
\emph{On the Diophantine Equation \(x_1x_2\cdots x_n=h(n)(x_1+x_2+\cdots+x_n)\)}.
Pr. Nauk. Uniw. \'Sl\k{a}sk. Katowicach 1751, Ann. Math. Silesianae {\bf 12}, 123 to 130 (1998).


\bibitem{BMS}
Y. Bugeaud, M. Mignotte, S. Siksek:
\emph{Classical and modular approaches to exponential Diophantine equations, I.~Fibonacci and Lucas perfect powers}.
Ann. Math. {\bf 163}, 969 to 1018 (2006).


\bibitem{CEP}
E. R. Canfield, P. Erd\H{o}s, C. Pomerance:
\emph{On a problem of Oppenheim concerning ``Factorisatio Numerorum''}.
J.~Number Theory {\bf 17}, 1 to 28 (1983).


\bibitem{DM}
H. Darmon, L. Merel:
\emph{Winding quotients and some variants of Fermat's Last Theorem}.
J.~Reine Angew. Math. {\bf 490}, 81 to 100 (1997).


\bibitem{Eck}
M. W. Ecker:
\emph{When does a sum of positive integers equal their product?}
Math. Mag. {\bf 75}(1), 41 to 47 (2002).


\bibitem{E}
P. Erd\H{o}s:
\emph{Az $\frac{1}{x_1}+\frac{1}{x_2}+\cdots+\frac{1}{x_n}=\frac{a}{b}$ egyenlet eg\'esz sz\'am\'u megold\'asair\'ol}.
Mat. Lapok {\bf 1}, 192 to 210 (1950).


\bibitem{Er}
P. Erd\H{o}s:
\emph{On a diophantine equation}.
J. London Math. Soc. {\bf 26}, 176 to 178 (1951).


\bibitem{Guy2004}
R. K. Guy:
\emph{Unsolved Problems in Number Theory}, 3rd edn.
Springer, New York (2004).


\bibitem{Gy}
K. Gy\H{o}ri:
\emph{On the diophantine equation $\binom{n}{k}=x^{l}$}.
Acta Arith. {\bf 80}, 289 to 295 (1997).


\bibitem{HLP}
G. H. Hardy, J. E. Littlewood, G. P\'olya:
\emph{Inequalities}.
Cambridge University Press, Cambridge (1934).


\bibitem{KN}
L. Kurlandchik, A. Nowicki:
\emph{When the sum equals the product}.
Math. Gazette {\bf 84}(499), 91 to 94 (2000).


\bibitem{K}
S. V. Konyagin:
\emph{Double exponential lower bound for the number of representations of unity by Egyptian fractions}.
Math. Notes {\bf 95}(2), 277 to 281 (2014).


\bibitem{LPS}
F. Luca, I. Pink, C. S\'andor:
\emph{On the largest value of the solutions of Erd\H{o}s' last equation}.
Int. J. Number Theory {\bf 21}(6), 1281 to 1295 (2025).



\bibitem{Matv}
E. M. Matveev:
\emph{An explicit lower bound for a homogeneous rational linear form in logarithms of algebraic numbers II}.
Izv. Ross. Akad. Nauk Ser. Mat. {\bf 64}, 125 to 180 (2000);
Izv. Math. {\bf 64}, 1217 to 1269 (2000) (in English).


\bibitem{MU}
P. Miska, M. Ulas:
\emph{On the Diophantine equation \(\sigma_2(\mathbf{X}_n)=\sigma_n(\mathbf{X}_n)\)}.
Int. J. Number Theory {\bf 20}, 1287 to 1306 (2024).


\bibitem{Ny}
M. A. Nyblom:
\emph{Sophie Germain primes and the exceptional values of the equal sum and product problem}.
Fibonacci Quart. {\bf 50}(1), 58 to 61 (2012).


\bibitem{Op}
A. Oppenheim:
\emph{On an arithmetic function II}.
J. London Math. Soc. {\bf 2}, 123 to 130 (1927).


\bibitem{OEISA000058}
OEIS Foundation Inc.:
\emph{Sequence A000058 (Sylvester's sequence)}.
The On Line Encyclopedia of Integer Sequences.
Available at \url{https://oeis.org/A000058}.


\bibitem{Sci}
A. Schinzel:
\emph{Sur une propri\'et\'e du nombre de diviseurs}.
Publ. Math. Debrecen {\bf 3}, 136 to 138 (1954).


\bibitem{Sc}
A. Schinzel:
\emph{Selecta, vol. 2}.
European Mathematical Society, Z\"urich (2007), 261 to 262.


\bibitem{SZ}
C. S\'andor, M. Zakarczemny:
\emph{Some notes on Erd\H{o}s's last theorem}.
Res. Number Theory {\bf 11}, 94 (2025).


\bibitem{Shiu}
P. Shiu:
\emph{On Erd\H{o}s's last equation}.
Amer. Math. Monthly {\bf 126}(9), 802 to 808 (2019). 


\bibitem{S}
K. Soundararajan:
\emph{Approximating 1 from below using Egyptian fractions}.
arXiv:math/0502247.


\bibitem{Tij}
R. Tijdeman:
\emph{On integers with many small prime factors}.
Compositio Math. {\bf 26}, 319 to 330 (1973).



\bibitem{RW}
R. Weingartner:
\emph{Asymptotic formula for the equal sum and product problem}.
J. Number Theory {\bf 157}, (2015).


\bibitem{ZAMUC}
M. Zakarczemny:
\emph{On the equal sum and product problem}.
Acta Math. Univ. Comenianae {\bf 90}(4), 387 to 402 (2021).


\bibitem{ZCMB}
M. Zakarczemny:
\emph{Equal Sum Product problem II}.
Canad. Math. Bull. {\bf 66}(3), 582 to 592 (2023).


\endthebibliography

\end{document}